\title{Generalization of the ABC theorem on locally nilpotent derivations}
\author{Veronika Kikteva}
\address{HSE University, Faculty of Computer Science, 11 Pokrovsky Bulvar, Moscow, 109028, Russia}
\email{kiktevanika@gmail.com}
\subjclass{Primary 13N15, 13A50; Secondary 14R05, 14R10}
\keywords{Locally nilpotent derivation, affine algebraic variety, Makar-Limanov invariant}
\thanks{The study was implemented in the framework of the Basic Research Program at the National Research University Higher School of Economics in 2023.}
\thanks{This is a preprint of the Work accepted for publication in Siberian Mathematical Journal, 2023, Vol. 64, No 5, pp. 1167-1178, http://pleiades.online/.}
\begin{document}
\maketitle

\theoremstyle{plain}
\newtheorem{thm}{Theorem}[section]
\newtheorem{lem}[thm]{Lemma}
\newtheorem{que}[thm]{Question}
\newtheorem{sled}[thm]{Corollary}
\newtheorem{opr}[thm]{definition}
\theoremstyle{remark}
\newtheorem{re}[thm]{Remark}
\theoremstyle{definition}
\newtheorem{example}[thm]{Example}

\begin{abstract}
We obtain a generalization of the ABC Theorem on locally nilpotent derivations to the case of the polynomials with $m$ monomials such that each variable is included just in one monomial. As applications of this result we provide some construction of rigid and semi-rigid algebras and describe the Makar-Limanov invariant of algebras of a special form.
\end{abstract}

\section{Introduction}

Suppose $\mathbb{K}$ is an algebraically closed field of characteristic zero. Let $B$ be a commutative $\mathbb{K}$-domain. A \textit{derivation} on $B$ is a linear map $D$: $B\to B$ which satisfies the Leibniz rule. A derivation $D$ on $B$ is \textit{locally nilpotent} if for each $b\in B$ there
exists a non-negative integer $n$ such that $D ^n (b)=0$. We denote the set of locally nilpotent derivations on $B$ by $\mathrm{LND}(B)$.

We say that $b_1,b_2\in B$ are \textit{relatively prime} in $B$ if $(b_1) \cap (b_2) = (b_1 b_2)$. This generalizes the definition of relative primeness for nonzero elements in a UFD.

The following theorem is an important result in the theory of locally nilpotent derivations. It states that if some elements satisfy a polynomial equation of a special form, and additional conditions are imposed on them, then these elements are contained in the kernel of every locally nilpotent derivation.

\begin{thm}\normalfont{\cite[ABC Theorem]{fr}}
    \label{ABC}
    Suppose that $x,y,z\in B$ are pairwise relatively prime and satisfy $x^a+y^b+z^c=0$ for some integers $a, b, c \geq 2$. If $a^{-1}+b^{-1}+c^{-1}\leq 1$, then $\mathbb{K}[x,y,z]\subseteq \mathrm{Ker}\ D$ for all $D\in \mathrm{LND}(B)$.
\end{thm}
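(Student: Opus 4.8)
\medskip
\noindent\textbf{Proof plan.}
If $D=0$ then $\mathrm{Ker}\,D=B$ and there is nothing to prove, so I fix $D\in\mathrm{LND}(B)$ with $D\neq 0$ and will in fact establish $Dx=Dy=Dz=0$, which suffices since $\mathrm{Ker}\,D$ is a $\mathbb{K}$-subalgebra of $B$. The main tool is the degree function $\deg_D\colon B\to\mathbb{Z}_{\geq 0}\cup\{-\infty\}$, where $\deg_D(b)$ is the largest $n$ with $D^nb\neq 0$ and $\deg_D(0)=-\infty$; since $B$ is a domain and $\mathrm{char}\,\mathbb{K}=0$ it satisfies $\deg_D(bc)=\deg_D(b)+\deg_D(c)$, $\deg_D(b_1+b_2)\leq\max(\deg_D b_1,\deg_D b_2)$, $\deg_D(Db)=\deg_D(b)-1$ whenever $Db\neq 0$, and $b\in\mathrm{Ker}\,D$ exactly when $\deg_D(b)\leq 0$. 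First I would record that relative primeness passes to powers: if $b_1,b_2$ are relatively prime in $B$, then so are $b_1^m$ and $b_2^n$ for all $m,n\geq 1$; this is a short induction from $(b_1)\cap(b_2)=(b_1b_2)$ using cancellation in the domain $B$. With it the degenerate cases are immediate: if one of $x,y,z$ is $0$, the relation forces the other two to be units of $B$ and hence to lie in $\mathrm{Ker}\,D$, so from now on $x,y,z$ are all nonzero.

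Next I would apply $D$ to $x^a+y^b+z^c=0$ to get $ax^{a-1}Dx+by^{b-1}Dy+cz^{c-1}Dz=0$, then multiply this by $x$ and by $y$ in turn and substitute $x^a=-(y^b+z^c)$, respectively $y^b=-(x^a+z^c)$. This produces
\[
y^{b-1}(bxDy-ayDx)=z^{c-1}(azDx-cxDz),\qquad x^{a-1}(ayDx-bxDy)=z^{c-1}(bzDy-cyDz).
\]
Abbreviating $U=ayDx-bxDy$, $V=bzDy-cyDz$, $W=azDx-cxDz$ and using that $x^{a-1},y^{b-1},z^{c-1}$ are pairwise relatively prime (by the power observation above), cancellation in $B$ yields the divisibilities $z^{c-1}\mid U$, $x^{a-1}\mid V$, $y^{b-1}\mid W$.

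Now I assume, toward a contradiction, that not all of $Dx,Dy,Dz$ vanish, and set $p=\deg_D x$, $q=\deg_D y$, $r=\deg_D z$. If only one of the three derivatives is nonzero, the differentiated relation collapses (say to $ax^{a-1}Dx=0$) and forces it to vanish as well, a contradiction; hence at least two of $p,q,r$ are positive. Then $U,V,W$ are all nonzero, because, e.g., $U=0$ would give $ayDx=bxDy$, whence $x\mid Dx$ by relative primeness, which is impossible when $Dx\neq 0$. From $x^{a-1}\mid V$ together with $\deg_D V\leq q+r-1$ I obtain $(a-1)p\leq q+r-1$, and cyclically $(b-1)q\leq p+r-1$ and $(c-1)r\leq p+q-1$. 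Writing $S=p+q+r$, these become $ap\leq S-1$, $bq\leq S-1$, $cr\leq S-1$, so that $S=p+q+r\leq(S-1)(a^{-1}+b^{-1}+c^{-1})\leq S-1$, which is absurd since $S\geq 1$. Therefore $Dx=Dy=Dz=0$, and hence $\mathbb{K}[x,y,z]\subseteq\mathrm{Ker}\,D$ for every $D\in\mathrm{LND}(B)$.

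The step I expect to be the main obstacle is establishing the divisibilities $z^{c-1}\mid U$, $x^{a-1}\mid V$, $y^{b-1}\mid W$ rigorously from the \emph{ideal-theoretic} notion of relative primeness, in particular proving that relative primeness survives taking powers, since $B$ is not assumed to be a UFD. Once these are in hand the remainder is routine bookkeeping with $\deg_D$, and the hypothesis $a^{-1}+b^{-1}+c^{-1}\leq 1$ enters only in the closing inequality.
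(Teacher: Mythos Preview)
Your argument is correct. The degenerate cases, the claim that $U,V,W\neq 0$, the divisibilities $z^{c-1}\mid U$, $x^{a-1}\mid V$, $y^{b-1}\mid W$ via the ideal identity $(b_1)\cap(b_2)=(b_1b_2)$ and cancellation, and the final chain of inequalities $ap,bq,cr\le S-1$ all go through as you describe; the only spots needing a sentence more of care are (i) when $U=0$ and $Dx=0$, where you should note this forces $Dy=0$ as well and hence contradicts ``at least two of $p,q,r$ positive'' before invoking $x\mid Dx$, and (ii) the power-primeness lemma, which is indeed routine.

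However, your route is \emph{not} the one the paper takes. The paper obtains the ABC~Theorem as the special case $m=3$, $n_i=1$, $a_i=1$ of its Theorem~3.1, whose proof proceeds by choosing a local slice $t$ of $D$, embedding $B\subseteq \mathrm{Quot}(\mathrm{Ker}\,D)[t]$, transferring relative primeness to $K[t]$ via Lemma~2.4, and then invoking the (generalized) Mason--Stothers theorem to bound $\deg_t f_1$; the contradiction comes from comparing that bound with $\sum 1/k_{ij}\le 1/(m-2)$. Your proof is instead the direct ``Wronskian-type'' argument: differentiate the relation, extract the three cross-terms $U,V,W$, and exploit divisibility plus $\deg_D$ without ever leaving $B$ or citing Mason--Stothers. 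Your approach is more elementary and self-contained for $m=3$, but it is precisely the Mason--Stothers route that lets the paper push through to $m\ge 3$ monomials in Theorem~3.1; the cross-term trick does not scale cleanly beyond three summands.
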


The ABC Theorem follows from the Mason~--- Stothers Theorem. Let $q$ be a polynomial in one variable over the field $\mathbb{K}$. Denote by $N(q)$ the number of distinct roots of $q$. The Mason~--- Stothers Theorem states the following.

\begin{thm}\normalfont{\cite{Stothers}}
    \label{MSABC}
    Let $a(t),b(t),c(t)$ be relatively prime not all constant polynomials in one variable over $\mathbb{K}$. If $a(t)+b(t)+c(t)=0$ then $$\mathrm{max}\{ \mathrm{deg}\ a(t), \mathrm{deg}\ b(t),\mathrm{deg}\ c(t)\}\leq N (a(t)b(t)c(t))-1.$$
\end{thm}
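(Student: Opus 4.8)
The plan is to deduce Theorem~\ref{MSABC} from the Wronskian of the pair $(a,b)$, following the classical argument of Stothers and Mason. First I would dispose of degenerate cases: if one of $a(t),b(t),c(t)$ vanishes, say $c=0$, then $a=-b$, and coprimality of $a$ and $b$ forces both to be constants, contradicting the hypothesis that they are not all constant; likewise, if two of the three were constant, the third would be constant too. So we may assume $a,b,c$ are nonzero and at most one of them is constant.

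Next, set $W:=ab'-a'b$. Differentiating $a+b+c=0$ gives $a'+b'+c'=0$, and substituting $c=-a-b$, $c'=-a'-b'$ shows
\[
W = ab'-a'b = bc'-b'c = ca'-c'a,
\]
so the Wronskian of any two of the three coincides. I would then check that $W\neq 0$: since $\mathbb{K}$ has characteristic zero, $W=ab'-a'b=0$ would force $(a/b)'=0$, i.e.\ $a=\lambda b$ for a constant $\lambda$; as $a$ and $b$ are nonzero and not both constant, this contradicts $\gcd(a,b)=1$. This is exactly the step where pairwise coprimality is genuinely used, and I expect it to be the point requiring the most care.

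The heart of the proof is a divisibility statement. Writing $a_0$ for the product of the distinct monic linear factors of $a$ (so $\deg a_0=N(a)$), the polynomial $a/a_0$ divides both $a$ and $a'$, hence divides $W=ab'-a'b$; similarly $b/b_0\mid W$ and $c/c_0\mid W$. Because $a,b,c$ are pairwise coprime, the three quotients $a/a_0$, $b/b_0$, $c/c_0$ are pairwise coprime, so their product divides $W$, and taking degrees (using $\deg(a_0b_0c_0)=N(a)+N(b)+N(c)=N(abc)$, valid by pairwise coprimality) gives
\[
\deg a + \deg b + \deg c - N(abc) \le \deg W .
\]
Comparing top-degree terms yields $\deg W=\deg(ab'-a'b)\le \deg a+\deg b-1$ (the leading terms of $ab'$ and $a'b$ cancel exactly when $\deg a=\deg b$). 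Combining the last two inequalities gives $\deg c\le N(abc)-1$, and by the symmetry of $W$ recorded above the same bound holds for $\deg a$ and $\deg b$; hence $\max\{\deg a,\deg b,\deg c\}\le N(abc)-1$. Apart from the non-vanishing of $W$, the only remaining thing to watch is keeping the degree bookkeeping correct when one of the polynomials has degree $0$, which is routine.
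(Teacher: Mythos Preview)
Your argument is the classical Wronskian proof of the Mason--Stothers theorem and is correct as written. One cosmetic remark: the paper's hypothesis ``relatively prime'' means $\gcd(a,b,c)=1$, but together with $a+b+c=0$ this forces pairwise coprimality (any common factor of two of them divides the third), so your appeal to $\gcd(a,b)=1$ in the $W\neq 0$ step is legitimate. The degree bookkeeping, the divisibility of $W$ by $(abc)/(a_0b_0c_0)$, and the symmetry argument are all sound.

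There is, however, nothing to compare against: the paper does not prove Theorem~\ref{MSABC} at all. It is quoted with a citation to Stothers and serves only as motivation; the paper's own arguments (Theorems~\ref{mysumeq0} and~\ref{mysumneq0}) rest instead on the generalized version, Theorem~\ref{genMSABC}, which is likewise cited without proof. So your Wronskian proof supplies something the paper deliberately omits, and it does so by exactly the route one finds in Stothers and Mason.
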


Theorem~\ref{genMSABC} generalizes the result of Mason and Stothers.

\begin{thm}\normalfont{\cite[Theorem B.1.3]{genABC}}
\label{genMSABC}
Let $n\geq 3$ and $f_1,\dots,f_n$ be not all constant polynomials over $\mathbb{K}$ such that $f_1+\dots+f_n=0$. Assume furthermore that for all $1\leq i_1<\dots<i_s\leq n$ we have $$f_{i_1}+\dots+f_{i_s}=0 \Longrightarrow \mathrm{gcd}(f_{i_1},\dots,f_{i_s})=1.$$
Then $$\max_{1\leq k\leq n} \mathrm{deg}(f_k) \leq (n-2) (N(f_1)+\dots+N(f_n)-1).$$
\end{thm}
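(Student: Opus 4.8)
Here is a sketch of how I would attack Theorem~\ref{genMSABC}.

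The plan is to argue by induction on $n$, using Theorem~\ref{MSABC} as the basic analytic input. First observe that applying the hypothesis to one–element index sets forces every $f_i\ne 0$, so all summands are nonzero throughout. For $n=3$ the statement is exactly Theorem~\ref{MSABC}: the hypothesis for the set $\{1,2,3\}$ gives $\gcd(f_1,f_2,f_3)=1$, hence $f_1,f_2,f_3$ are pairwise coprime (a common divisor of two of them divides the third), and Theorem~\ref{MSABC} yields $\max_k\deg f_k\le N(f_1f_2f_3)-1=N(f_1)+N(f_2)+N(f_3)-1$, the asserted bound.

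Now let $n\ge 4$ and assume the theorem for fewer summands. Suppose first that some proper nonempty $I\subsetneq\{1,\dots,n\}$ has $\sum_{i\in I}f_i=0$; then the complement $J$ also sums to $0$, and since no $f_i$ vanishes we have $2\le |I|,|J|\le n-2$, while the hypothesis is inherited by both subfamilies. Applying the induction hypothesis to $(f_i)_{i\in I}$ and to $(f_j)_{j\in J}$ — with the remark that an all–constant subfamily (in particular a two–element vanishing subsum, whose members the hypothesis forces to be constants) has maximal degree $0$ — gives $\max_k\deg f_k\le\max\bigl((|I|-2)(\sum_{i\in I}N(f_i)-1),\,(|J|-2)(\sum_{j\in J}N(f_j)-1)\bigr)$, and since $|I|-2,|J|-2\le n-3<n-2$ and a partial sum of the $N(f_i)$ does not exceed $N(f_1)+\dots+N(f_n)$ (which is positive because not all $f_i$ are constant), this is $\le(n-2)(N(f_1)+\dots+N(f_n)-1)$.

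So assume no proper subsum vanishes, and consider the relation space $R=\{(c_i)\in\mathbb K^n:\sum_ic_if_i=0\}\ni(1,\dots,1)$. If $\dim R\ge 2$, choose $v\in R$ not proportional to $(1,\dots,1)$; after subtracting a multiple of $(1,\dots,1)$ we may assume $v$ has a zero coordinate, and we use $v$ to eliminate one variable from $\sum f_i=0$, obtaining a shorter vanishing linear combination of the remaining $f_i$. Clearing the (unit) coefficients, and dividing through by the gcd of the resulting polynomials if that gcd is nontrivial, we get a sum–zero relation with between $3$ and $n-1$ summands (the degenerate possibility that $v$ has exactly two nonzero coordinates is a proportionality $f_i=\lambda f_j$, handled by fusing $f_i$ and $f_j$ into one summand of the original relation). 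One then verifies that this shorter relation again satisfies the hypothesis of the theorem, applies the induction hypothesis, and transports the estimate back — through the gcd–division and through the passage between a single $\deg f_k$ and $\max_k\deg f_k$ — to bound every $\deg f_k$. In the remaining case $\dim R=1$, i.e.\ $f_1,\dots,f_{n-1}$ are $\mathbb K$–linearly independent, so $[f_1:\dots:f_n]\colon\mathbb P^1\to\mathbb P^{n-2}$ (with target the hyperplane $\{\sum x_i=0\}\subset\mathbb P^{n-1}$) is a linearly nondegenerate morphism, of degree $\max_k\deg f_k$ because $\gcd(f_1,\dots,f_n)=1$, meeting the $n$ coordinate hyperplanes, which are in general position inside $\mathbb P^{n-2}$. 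A careful local analysis at the zeros of the $f_k$ of the Wronskian $W(f_1,\dots,f_{n-1})$ — which coincides up to sign with $W(f_1,\dots,\widehat{f_k},\dots,f_n)$ for every $k$ and is divisible by $\gcd(f_k,f_k',\dots,f_k^{(n-2)})$ and its refinements — generalizes Mason's proof of Theorem~\ref{MSABC} and gives $\max_k\deg f_k\le\sum_{k}N^{[n-2]}(f_k)-\binom{n-1}{2}$, where $N^{[m]}(f)=\sum_\alpha\min(\mathrm{ord}_\alpha f,m)$; this is the function–field case of Cartan's Second Main Theorem. Since $N^{[n-2]}(f_k)\le(n-2)N(f_k)$ and $\binom{n-1}{2}\ge n-2$, the right–hand side is $\le(n-2)(N(f_1)+\dots+N(f_n)-1)$.

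The delicate point is the reduction when $\dim R\ge 2$. One must show that the shorter relation extracted from an extra linear dependence again satisfies the \emph{full} hypothesis "every vanishing subsum is coprime", and it is exactly here that the strength of the hypothesis — rather than mere coprimality of the whole tuple — is used, since a sub-combination $\sum_{i\in T}c_if_i=0$ with arbitrary coefficients carries no information about $\gcd(f_i:i\in T)$; so $v$, and the eliminated variable, must be chosen economically (e.g.\ with $v$ of minimal support) so that the new relation has no proper vanishing subsum at all, after which the divided-out gcd makes the single remaining (total) subsum coprime. Reconciling the counting functions $N$, $N^{[n-2]}$ and the degrees of the discarded gcd-factors so that all constants line up with $n-2$ is the remaining bookkeeping; the analytic input (Theorem~\ref{MSABC}, and its Wronskian refinement) then closes the argument.
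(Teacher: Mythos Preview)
The paper does not prove Theorem~\ref{genMSABC}; it is quoted from \cite[Theorem~B.1.3]{genABC} and used as a black box in the proofs of Theorems~\ref{mysumeq0} and~\ref{mysumneq0}. There is therefore no proof in the paper against which to compare your attempt.

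On its own merits, your outline follows the standard route to the generalized Mason--Stothers inequality: induction on $n$, splitting off proper vanishing subsums, and in the linearly nondegenerate case invoking the Wronskian/Cartan Second Main Theorem for $\mathbb{P}^1$. The base case $n=3$ and the vanishing-subsum reduction are correct, as is the estimate $N^{[n-2]}(f_k)\le(n-2)N(f_k)$, $\binom{n-1}{2}\ge n-2$ that converts the Cartan bound into the stated one.

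The subcase $\dim R\ge 2$ with no proper vanishing subsum, however, is not closed by your sketch, and you flag this yourself. Choosing a minimal-support relation $v$ with support $S$ and dividing by $d=\gcd(f_i:i\in S)$ lets you apply the induction hypothesis to $(c_if_i/d)_{i\in S}$, but the output only bounds $\deg f_i-\deg d$ for $i\in S$. You give no argument bounding $\deg d$ (the hypothesis of the theorem says nothing about $\gcd$ over an index set whose coefficient-$1$ subsum need not vanish), and no argument bounding $\deg f_k$ for $k\notin S$. The fusing trick you describe handles $|S|=2$ cleanly, since then $f_i=\lambda f_j$ and one checks, as you do implicitly, that the shortened coefficient-$1$ relation inherits the full hypothesis with $n$ replaced by $n-1$; but for $|S|\ge 3$ the reduction as written does not produce a bound on all of $\deg f_1,\dots,\deg f_n$. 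In the source and in the closely related arguments of Brownawell--Masser and Voloch this difficulty is handled not by isolating a single short relation but by working directly with the Wronskian of a maximal linearly independent subset $f_1,\dots,f_r$ and using that every $f_k$ is a $\mathbb K$-combination of these, so that the zero-multiplicity bookkeeping yields the estimate uniformly in $k$. Your sketch would need either that argument or an iteration that systematically reduces $\dim R$ while keeping track of all indices, not just those in $S$.
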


The \textit{Makar-Limanov invariant} of a commutative $\mathbb{K}$-domain $B$ is the subalgebra of $B$ defined by $$\mathrm{ML}(B)=\bigcap_{D\in \text{LND}(B)} \text{Ker}\ D.$$ This invariant was introduced by Makar-Limanov in~\cite{ML}, called the ring
of absolute constants or the absolute kernel, and denoted by $\mathrm{AK}(B)$. This invariant has many remarkable properties, see~\cite{fr}. It is easy to see that  $$\mathrm{ML}(\mathbb{K}[X_1,\dots,X_n])=\mathbb{K}.$$ The Makar-Limanov invariant can be used to prove that some varieties are  not isomorphic. In particular, this invariant was used to prove that the Koras-Russell cubic threefold is not isomorphic to the affine space, see~\cite{ML}. It provided a new idea that led to solving the Linearization Problem for $\mathbb{C}^{\times}$-actions on $\mathbb{C}^3$.

A commutative $\mathbb{K}$-domain $B$ is called \textit{rigid} if $\mathrm{ML}(B)=B$. Equivalently, there are no nonzero locally nilpotent derivations on $B$. An affine algebraic variety $X$ is \textit{rigid} if the coordinate ring of $X$ is rigid. A commutative $\mathbb{K}$-domain $B$ is \textit{semi-rigid} if there exists $D\in \text{LND}(B)$ such that $\mathrm{ML}(B)=\text{Ker}\ D$. Recall that two locally nilpotent derivations are equivalent if their kernels coincide. A semi-rigid algebra is either rigid or admits only one nonzero locally nilpotent derivation up to equivalence, see~\cite{fr}.

The Mason~--- Stothers Theorem and the ABC Theorem are helpful in dealing with polynomials over algebraically closed fields. In particular, Fermat's Last Theorem for polynomials follows from Theorem~\ref{MSABC}, see~\cite[\S 7]{lang}.

Using these theorems, the automorphism groups of certain factorial varieties were investigated in~\cite{FinstonMaubach}, and the rigidity of some varieties was proved in~\cite{Chitayat, Crachiola, FinstonMaubachAlmostRigid}. For further applications and generalizations, see~\cite{IV,Gundersen,langDiophantine} and the chapter "Another generalization of Mason’s ABC-theorem" of~\cite{genABC}.

The aim of this paper is to prove a generalization of the ABC Theorem to the case of the polynomials such that each variable is included just in one monomial and to consider some applications of this result. The generalization of the ABC Theorem is stated in Theorem~\ref{mysumeq0}. To prove this statement we use Theorem~\ref{genMSABC}, which contains the generalization of the Mason~--- Stothers Theorem.

Theorem~\ref{mysumeq0} has several applications, some of them are given in the last section of this paper. We construct the new classes of rigid and
semi-rigid algebras, and describe the Makar-Limanov invariant of algebras of a special form.

The second result of this paper is Theorem~\ref{mysumneq0} which generalizes the following result.

\begin{thm}\normalfont{\cite[Theorem 2.50 (a)]{fr}}
Let $D\in \mathrm{LND}(B)$ be nonzero. Suppose $u,v\in \mathrm{Ker}\ D$ and $x,y\in B$ are nonzero, while $a,b\in \mathbb{Z}_{\geq 2}$. Assume
$ux^a+vy^b\neq 0$. Then the condition $D(ux^a+vy^b)=0$ implies $D(x)=D(y)=0$.
\end{thm}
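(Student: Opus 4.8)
The plan is to reduce the assertion to a one-variable polynomial identity over the field $L:=\operatorname{Frac}(\mathrm{Ker}\,D)$ and then to invoke the classical Mason--Stothers theorem (Theorem~\ref{MSABC}); here $u,v$ are understood to be nonzero, as the conclusion plainly fails for $u=0$. First I would set up the standard machinery attached to a nonzero $D\in\mathrm{LND}(B)$. Since $B$ is a domain, $\deg_D(b):=\max\{n\ge 0:D^nb\neq 0\}$ (with $\deg_D 0=-\infty$) is a degree function: $\deg_D(fg)=\deg_D f+\deg_D g$ and $\deg_D(f+g)\le\max(\deg_D f,\deg_D g)$. Moreover $D$ has a pre-slice: there is $r\in B$ with $Dr\neq 0=D^2r$ (take $D^{n-1}b$ for any $b$ with $n:=\deg_D b\ge 1$). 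Putting $s:=Dr\in\mathrm{Ker}\,D\setminus\{0\}$ and $t:=r/s$, the derivation $D$ extends to $B_s$, stays locally nilpotent, $Dt=1$, $\mathrm{Ker}(D\text{ on }B_s)=(\mathrm{Ker}\,D)_s$, and by the Slice Theorem $B_s=(\mathrm{Ker}\,D)_s[t]$ is a polynomial ring in which $\deg_D$ agrees with the degree in $t$. Hence, viewing everything inside the polynomial ring $L[t]$, the elements $x,y$ are polynomials in $t$ of degrees $p:=\deg_D x\ge 0$ and $q:=\deg_D y\ge 0$, while $w:=ux^a+vy^b\in\mathrm{Ker}\,D\setminus\{0\}$ is a nonzero constant of $L[t]$, and $u,v\in L^\times$.

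Now assume, towards a contradiction, that $D(x)\neq 0$ and $D(y)\neq 0$, so $p,q\ge 1$. I would first show $\gcd(x,y)=1$ in the PID $L[t]$: writing $x=g\hat x$, $y=g\hat y$ with $g=\gcd(x,y)$ and, say, $a\le b$, the identity $ux^a+vy^b=w$ becomes $g^a\bigl(u\hat x^a+vg^{b-a}\hat y^b\bigr)=w$, and since $w$ is a nonzero constant this forces $\deg g=0$. Therefore the three polynomials $f_1:=ux^a$, $f_2:=vy^b$, $f_3:=-w$ satisfy $f_1+f_2+f_3=0$, are pairwise relatively prime ($f_1,f_2$ because $\gcd(x,y)=1$; any pair involving $f_3$ because $f_3\in L^\times$), and are not all constant since $\deg f_1=ap\ge 2$. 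Theorem~\ref{MSABC} then gives $\max\{ap,bq,0\}\le N(f_1f_2f_3)-1$, and since $f_1f_2f_3$ is a nonzero scalar multiple of $x^ay^b$ and $\gcd(x,y)=1$ we have $N(f_1f_2f_3)=N(x)+N(y)\le p+q$. Thus $ap\le p+q-1$ and $bq\le p+q-1$; using $a,b\ge 2$ this yields $2p\le p+q-1$ and $2q\le p+q-1$, i.e. $p\le q-1$ and $q\le p-1$, whose sum is the absurdity $0\le -2$. Hence $D(x)=0$ or $D(y)=0$.

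To finish, if $D(x)=0$ then $\deg_D(ux^a)=0$, so $bq=\deg_D(vy^b)=\deg_D(w-ux^a)\le\max(\deg_D w,\deg_D(ux^a))=0$, giving $q=0$ and $D(y)=0$; the case $D(y)=0$ is symmetric. So $D(x)=D(y)=0$, as desired. I expect the real content to lie entirely in the set-up: localizing at a pre-slice element so that $\deg_D$ becomes an honest degree in a variable $t$ over the field $L$, together with the coprimality reduction, which recast the hypothesis $D(ux^a+vy^b)=0$ in exactly the shape demanded by Mason--Stothers. After that, the inequality $\max(ap,bq)\le p+q-1$ with $a,b\ge 2$ does all the remaining work.
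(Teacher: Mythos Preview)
Your proof is correct and takes essentially the same approach as the paper. The paper does not prove this cited statement separately, but its proof of the generalization Theorem~\ref{mysumneq0} (specialized to $m=2$) proceeds identically: embed $B$ into $\mathrm{Quot}(\mathrm{Ker}\,D)[t]$ via a local slice so that $\deg_D=\deg_t$, adjoin the nonzero constant $f_{3}=-(ux^a+vy^b)$ as a third summand, apply Mason--Stothers, and derive the contradictory inequality from $a^{-1}+b^{-1}\le 1$.
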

Note that Theorem~\ref{mysumneq0} allows us to answer the following question in a particular case.
\begin{que}\normalfont{\cite[Question 11.9]{fr}}
Let $B$ be an affine $\mathbb{C}$-domain, and let $p\in \mathbb{C}^{[m]}$, with ${m\geq 2}$, be
such that $\delta(p)\neq 0$ for all nonzero $\delta \in \mathrm{LND}(\mathbb{C}^{[m]})$. Suppose that there exist algebraically independent $a_1,\dots,a_m\in B$ and nonzero $D\in \mathrm{LND}(B)$ such that
$D(p(a_1,\dots,a_m))=0$. Does this imply $D(a_i)=0$ for each $i=1,\dots,m$?
\end{que}

The author is grateful to Sergey~Gaifullin and to Ivan~Arzhantsev for constant attention to this work.

\section{Preliminaries}

Let $D$ be a locally nilpotent derivation on a commutative $\mathbb{K}$-domain $B$. Then the derivation $D$ induces the degree function on $B$:
$$\mathrm{deg}_D(b)=\mathrm{min} \{n\in \mathbb{N}\ |\   D^{n+1}(b)=0 \}$$ for nonzero $b\in B$, and $\mathrm{deg}_D(0)= - \infty$.

Recall that a subalgebra $A\subseteq B$ is called \textit{factorially closed} if for all nonzero $x,y\in B$ the condition $xy\in A$ implies $x,y\in A$. The kernel of every locally nilpotent derivation is factorially closed, see~\cite[Principle 1 (a)]{fr}.

An element $t\in B$ is a \textit{local slice} of a derivation $D\in \mathrm{LND}(B)$ if $D^2(t)=0$ and $D(t)\neq 0$. Note that if a derivation $D$ is locally nilpotent and nonzero, then it has a local slice. Indeed, it suffices to choose $b\in B$ such that $D(b)\neq 0$ and put $n:=\mathrm{deg}_D (b)\geq 1$. We have $D^n(b)\neq 0$ and $D^{n+1}(b)=0$. Therefore, $D^{n-1}(b)$ is a local slice for $D$.

Denote by $\mathrm{Quot}(B)$ the quotient field of $B$. Let $S\subseteq B \setminus \{ 0 \}$ be a multiplicatively closed subset. Consider the ring $$S^{-1}B:=\{ ab^{-1}\in \mathrm{Quot}(B) \ | \ a\in B, b\in S \}.$$ The ring $S^{-1}B$ is called the \textit{localization} of $B$ at $S$. If $S=\{ f^i \}_{i\geq 0}$ for some nonzero $f\in B$ then $B_f$ denotes $S^{-1}B$.  

\begin{thm}\normalfont{\cite[Principle 11 (d)]{fr}}
\label{thmslice}
Let $D\in\mathrm{LND}(B)$ be given, and let $t\in B$ be a local slice of $D$. Set $A=\mathrm{Ker}\ D$. Then $B_{D(t)}=A_{D(t)}[t]$.
\end{thm}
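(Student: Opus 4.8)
The plan is to localize at $s:=D(t)$ so that $t$ becomes (up to a unit) a genuine slice, and then to expand an arbitrary element of $B_s$ as a polynomial in that slice by induction on the $D$-degree.

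First I would record the basic compatibilities. Since $D^2(t)=0$ we have $s\in A$, and $s\neq 0$ because $t$ is a local slice. As $D(s)=0$, the formula $D(b/s^k):=D(b)/s^k$ defines an extension of $D$ to $B_s$, and $D^m(b/s^k)=D^m(b)/s^k=0$ for $m>\deg_D(b)$, so $D|_{B_s}\in\mathrm{LND}(B_s)$; in particular every element of $B_s$ has finite $D$-degree. Moreover $\mathrm{Ker}(D|_{B_s})=A_s$: the inclusion $\supseteq$ is clear, and if $D(b)/s^k=0$ in $B_s$ then $s^ND(b)=0$ in $B$ for some $N$, hence $D(b)=0$ as $B$ is a domain, so $b\in A$. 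Put $\sigma:=t/s\in B_s$; then $D(\sigma)=1$. Since $s$ is a unit of $A_s$ we have $A_s[\sigma]=A_s[t]$, so it is enough to prove $B_s=A_s[\sigma]$, and the inclusion $A_s[\sigma]\subseteq B_s$ is obvious.

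For the reverse inclusion I would induct on $n:=\deg_D(b)$ for $b\in B_s$. If $n=0$ then $b\in\mathrm{Ker}(D|_{B_s})=A_s$. If $n\geq 1$, then $\deg_D(D(b))=n-1$, so by the induction hypothesis $D(b)\in A_s[\sigma]$; write $D(b)=\sum_j c_j\sigma^j$ with $c_j\in A_s$ (a finite sum). Because $D(\sigma)=1$ and each $c_j$ lies in $\mathrm{Ker}\,D$, the Leibniz rule gives $D\big(\sum_j\tfrac{1}{j+1}c_j\sigma^{j+1}\big)=\sum_j c_j\sigma^j=D(b)$, where I use $\mathrm{char}\,\mathbb{K}=0$ to divide by $j+1$. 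Hence $b-\sum_j\tfrac{1}{j+1}c_j\sigma^{j+1}\in\mathrm{Ker}(D|_{B_s})=A_s$, so $b\in A_s[\sigma]$. This proves $B_s=A_s[\sigma]=A_s[t]$. (If one prefers to avoid the explicit antiderivative, one can instead use the Dixmier map $\rho\colon B_s\to B_s$, $\rho(b)=\sum_{i\geq 0}\tfrac{(-1)^i}{i!}D^i(b)\,\sigma^i$, check by a telescoping computation that $D\circ\rho=0$ so that $\rho(b)\in A_s$, and observe that $b-\rho(b)$ is a combination of positive powers of $\sigma$ whose coefficients have smaller $D$-degree; the same induction then applies.)

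The only step carrying real content is this inductive ``integration'' in $\sigma$ (equivalently, checking that $D$ restricted to $A_s[\sigma]$ is formal differentiation in $\sigma$); the rest is routine bookkeeping about localization. For completeness I would finally note that $t$ is transcendental over $A_s$, so that $A_s[t]$ is a genuine polynomial ring: if $\sum_{j=0}^n a_jt^j=0$ with $a_j\in A_s$, $a_n\neq 0$ and $n\geq 1$ minimal, applying $D$ and dividing by $s\neq 0$ yields $\sum_{j=1}^n ja_jt^{j-1}=0$, a shorter relation with nonzero leading coefficient $na_n$ (again using $\mathrm{char}\,\mathbb{K}=0$), a contradiction; and $n=0$ forces $a_0=0$.
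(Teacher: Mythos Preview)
Your argument is correct: the extension of $D$ to $B_s$, the identification $\mathrm{Ker}(D|_{B_s})=A_s$, the normalization $\sigma=t/s$ with $D(\sigma)=1$, and the induction on $\deg_D$ via formal antidifferentiation (or the Dixmier map) all go through as written, and the final transcendence check is fine.

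There is nothing to compare against in the paper itself: Theorem~\ref{thmslice} is quoted from \cite[Principle~11~(d)]{fr} and is not proved in the paper. Your proof is essentially the standard one found in Freudenburg's book, so it matches the cited source.
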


Note that if there is a nonzero locally nilpotent derivation $D$ on $B$ then it is possible to embed $B$ in the polynomial ring in a local slice $t$ of $D$ over the quotient field of the kernel of $D$. The following lemma claims that if elements are pairwise relatively prime in $B$ then they are pairwise relatively prime as polynomials in $t$.

\begin{lem}
\label{vzprost}
Let $A$ be a subalgebra of $B$ and let $t\in B$ be a transcendent element over $A$ such that $B\subseteq \mathrm{Quot}(A)[t]$. 
If elements $g_1,\dots,g_m$ are pairwise relatively prime as elements of $B$ then they are pairwise relatively prime as elements of $\mathrm{Quot}(A)[t]$.
\end{lem}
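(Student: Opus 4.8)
The plan is to reduce immediately to two elements: it suffices to show that whenever a pair $g_i,g_j$ is relatively prime in $B$ it is relatively prime in $R:=\mathrm{Quot}(A)[t]$. Since $t$ is transcendental over $A$ it is transcendental over $\mathrm{Quot}(A)$, so $R$ is a polynomial ring in one variable over a field, hence a principal ideal domain; there the relation $(g)\cap(g')=(gg')$ from the paper's definition is equivalent, for nonzero $g,g'$, to $\gcd(g,g')$ being a unit, i.e. to every common divisor of $g$ and $g'$ in $R$ being a unit. (If one of the $g_i$ equals $0$ it is relatively prime to every element in any ring, in the ideal-theoretic sense, so that case is trivial.) So I would fix nonzero $g:=g_i$, $g':=g_j$ that are relatively prime in $B$, let $h\in R$ be an arbitrary common divisor, and write $g=hp$, $g'=hp'$ with $p,p'\in R$; the goal is to prove that $h$ is a unit of $R$.

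The key identity is $g p'=h p p'=g' p$ in $R$. To turn this into something governed by the relative primeness in $B$, I would clear denominators: $p$ and $p'$ are polynomials in $t$ with finitely many coefficients in $\mathrm{Quot}(A)$, so there is $s\in A\setminus\{0\}$ with $sp,\,sp'\in A[t]$, and $A[t]\subseteq B$ because $A\subseteq B$ and $t\in B$. Put $e:=s\,g p'=s\,g' p$. Then $e=g\cdot(sp')\in(g)$ and $e=g'\cdot(sp)\in(g')$ as ideals of $B$, so the hypothesis $(g)\cap(g')=(gg')$ in $B$ yields $e=gg'c$ for some $c\in B$.

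Now I would substitute back, working in $R$ (which contains $B$): $e=shpp'$ and $gg'=h^2pp'$, so $shpp'=h^2pp'c$, and cancelling the nonzero factor $hpp'$ in the domain $R$ gives $s=hc$. Since $s$ is a nonzero element of the field $\mathrm{Quot}(A)\subseteq R$, it is a unit of $R$, hence so is $h$. Therefore $\gcd(g,g')=1$ in $R$, i.e. $g$ and $g'$ are relatively prime in $\mathrm{Quot}(A)[t]$; as the pair $i,j$ was arbitrary, $g_1,\dots,g_m$ are pairwise relatively prime there.

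The only non-formal step is the round trip between $R$ and $B$: one must manufacture a concrete element of $(g)\cap(g')$ out of the common divisor $h$, absorbing the denominators coming from $\mathrm{Quot}(A)$ via $A[t]\subseteq B$; once the witness $e$ lies in $B$, the factoriality-type hypothesis does the work, and the return to $R$ is mere cancellation in a domain. I do not expect any genuine obstacle beyond keeping track that $g,g'$ (and hence $hpp'$) are nonzero, which is exactly why the reduction to nonzero $g_i$ is made at the outset.
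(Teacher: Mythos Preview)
Your proof is correct and follows essentially the same route as the paper. Both arguments clear denominators to produce an element of $(g_i)\cap(g_j)$ inside $B$, invoke the relative-primeness hypothesis to land it in $(g_ig_j)$, and then cancel in $\mathrm{Quot}(A)[t]$ to force the common divisor to be a unit; the only cosmetic differences are that the paper uses two separate denominator-clearing constants and finishes with a degree count, whereas you use a single $s$ and conclude via the unit equation $s=hc$.
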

\begin{proof}
Assume that $g_i$ and $g_j$ are relatively prime in $B$ and have a nontrivial common divisor $f$ in $\mathrm{Quot}(A)[t]$. Since $f$ divides $g_i$ in $\mathrm{Quot}(A)[t]$, it follows that $\frac{g_i}{f}$ has the form $$\frac{g_i}{f}=\frac{c_n}{a}t^n+\dots +\frac{c_1}{a}t +\frac{c_0}{a},$$ where $a, c_0,\dots ,c_n\in A$. Similarly, the element $\frac{g_j}{f}$ has the form $$\frac{g_j}{f}=\frac{d_k}{b}t^k+\dots +\frac{d_1}{b}t +\frac{d_0}{b}$$ for some $b,d_0,\dots, d_k\in A$.

Note that $\frac{a g_i}{f},\frac{b g_j}{f}\in B$. Therefore, the element $$q:= \frac{abg_i g_j}{f}=g_i a \frac{b g_j}{f}=g_j b \frac{a g_i}{f}$$  is contained in the intersection of the ideals generated by $g_i$ and $g_j$, i.e., $q\in (g_i)\cap (g_j)$.

If $q$ is contained in the ideal $(g_i g_j)$,
then there exists an element $h\in B$ such that $$q=\frac{abg_i g_j}{f}=g_i g_j h.$$ 
Therefore, we obtain $$\mathrm{deg}_t(a)+\mathrm{deg}_t(b)=0=\mathrm{deg}_t(h)+\mathrm{deg}_t(f).$$ 
Thus we have $\mathrm{deg}_t(f)=0$, because the degree of elements of the algebra $B$ as polynomials in $t$ cannot be negative. Since $f$ is not a constant in $\mathrm{Quot}(A)[t]$, we have that the elements $g_i$ and $g_j$ are not relatively prime. This contradiction proves the lemma.
\end{proof}

\section{The main results}
Let $D$ be a locally nilpotent derivation on the commutative $\mathbb{K}$-domain $B$. Suppose $m,n_1,\dots,n_m$ are positive integers, where $m\geq 3$, while $a_{i}\in \text{Ker}\ D$ are nonzero, $k_{ij}$ are positive integers and elements $b_{ij}\in B$, where $i=1,\dots,m$, and $j=1,\dots,n_i$.

Furthermore, $$f_i:=a_i b_{i1}^{k_{i1}}\dots b_{in_i}^{k_{in_i}} \in B,$$
where $i=1,\dots,m.$

\begin{thm}
\label{mysumeq0}
Suppose that $f_1+\dots+f_m=0$ and elements $f_1,\dots,f_m$ are pairwise relatively prime. Assume that $$\sum_{i=1}^{m} \sum_{j=1}^{n_i} \frac{1}{k_{ij}}\leq \frac{1}{m-2}.$$

Then the element $b_{ij}$ is contained in the kernel of $D$ for all $i=1,\dots,m$ and $j=1,\dots,n_i$.
\end{thm}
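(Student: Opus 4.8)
The plan is to reduce to the polynomial setting of Theorem~\ref{genMSABC} and then use a degree count to force each $b_{ij}$ into $\mathrm{Ker}\ D$. Suppose for contradiction that some $b_{ij}\notin\mathrm{Ker}\ D$. Since $\mathrm{Ker}\ D$ is factorially closed and the $a_i$ lie in it, this means that the product $b_{i1}^{k_{i1}}\cdots b_{in_i}^{k_{in_i}}$, hence $f_i$ itself, does not lie in $\mathrm{Ker}\ D$, so $D\neq 0$. Pick a local slice $t$ of $D$, set $A=\mathrm{Ker}\ D$, and use Theorem~\ref{thmslice}: after localizing at $D(t)$ we have $B_{D(t)}=A_{D(t)}[t]$, so in particular $B\subseteq\mathrm{Quot}(A)[t]$, and $t$ is transcendental over $A$. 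Now regard everything as living in the polynomial ring $R:=\mathrm{Quot}(A)[t]$ in one variable over the field $\mathbb{K}':=\mathrm{Quot}(A)$; this field still has characteristic zero, which is all Theorem~\ref{genMSABC} needs (it is a statement about polynomials over any field of characteristic zero, and the number of distinct roots is taken over an algebraic closure).

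The key observations in $R$ are: (i) $\deg_t f_k = \sum_{j=1}^{n_k} k_{kj}\deg_t(b_{kj})$, because the $a_k$ are constants in $R$ (they lie in $A$); (ii) $N(f_k)\leq \sum_{j=1}^{n_k} N(b_{kj}) \leq \sum_{j=1}^{n_k}\deg_t(b_{kj})$, since the distinct roots of $f_k$ are among the union of the roots of the $b_{kj}$ and each $b_{kj}$ has at most $\deg_t(b_{kj})$ distinct roots; and (iii) the coprimality hypotheses of Theorem~\ref{genMSABC} hold. For (iii): the $f_i$ are pairwise relatively prime in $B$, so by Lemma~\ref{vzprost} they are pairwise relatively prime in $R$; one then checks that if some subsum $f_{i_1}+\dots+f_{i_s}=0$ with $s\geq 2$, then any common factor of $f_{i_1},\dots,f_{i_s}$ would in particular divide $f_{i_1}$ and $f_{i_2}$, contradicting pairwise coprimality, while if $s=1$ then $f_{i_1}=0$, impossible since $f_{i_1}$ is a nonzero product in the domain $B$. (If some $f_i$ happened to be a nonzero constant in $t$, it contributes $\deg_t f_i=0$ and $N(f_i)=0$ and causes no trouble; the hypothesis that not all $f_i$ are constant follows from the assumption that some $b_{ij}\notin A$.)

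Applying Theorem~\ref{genMSABC} to $f_1,\dots,f_m$ now gives
$$
\max_{1\leq k\leq m}\deg_t(f_k)\ \leq\ (m-2)\Bigl(\sum_{i=1}^m N(f_i)-1\Bigr)\ <\ (m-2)\sum_{i=1}^m\sum_{j=1}^{n_i}\deg_t(b_{ij}).
$$
On the other hand, by the hypothesis $\sum_{i,j} k_{ij}^{-1}\leq (m-2)^{-1}$, a weighted-averaging argument bounds the right-hand side in terms of the left: writing $d=\max_k\deg_t f_k$, for each pair $(i,j)$ we have $\deg_t(b_{ij}) = \tfrac{1}{k_{ij}}\deg_t f_i \le \tfrac{1}{k_{ij}}d$, so $\sum_{i,j}\deg_t(b_{ij})\le d\sum_{i,j}k_{ij}^{-1}\le \tfrac{d}{m-2}$, whence the chain above yields $d < (m-2)\cdot\tfrac{d}{m-2}=d$, a contradiction — provided $d>0$, i.e. provided some $f_k$ (equivalently some $b_{ij}$) is nonconstant in $t$, which is exactly our assumption. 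The main obstacle, and the place requiring the most care, is the verification of the hypotheses of Theorem~\ref{genMSABC} — specifically establishing coprimality in $R$ via Lemma~\ref{vzprost} and handling the degenerate cases (some $f_i$ constant in $t$, or a vanishing subsum), as well as confirming that passing to the localization and the polynomial ring over $\mathrm{Quot}(A)$ is legitimate; once the problem is transported faithfully into $R$, the degree estimate is essentially forced.
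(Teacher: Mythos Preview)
Your proof is correct and follows essentially the same route as the paper's: embed $B$ into $\mathrm{Quot}(A)[t]$ via a local slice (Theorem~\ref{thmslice}), transfer pairwise coprimality to $\mathrm{Quot}(A)[t]$ through Lemma~\ref{vzprost} to verify the subsum hypothesis of Theorem~\ref{genMSABC}, and then combine the bound $\deg_t(b_{ij})\le \deg_t(f_i)/k_{ij}$ with $N(f_i)\le\sum_j\deg_t(b_{ij})$ to force the contradiction $d<d$. One small slip: you wrote $\deg_t(b_{ij}) = \tfrac{1}{k_{ij}}\deg_t f_i$, which should be an inequality $\le$ when $n_i>1$ (as you yourself noted in observation~(i)); the argument is unaffected.
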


\begin{proof}
Denote by $A$ the kernel of $D$ and by $t\in B$ a local slice of $D$. By Theorem~\ref{thmslice} it follows that there exists an embedding $B\subseteq K[t]=K^{[1]}$, where $K=\mathrm{Quot}(A)$, and the degree of elements of $B$ as polynomials in $t$ coincides with the degree function induced by the locally nilpotent derivation $D$: ${\mathrm{deg}_t=\mathrm{deg}_D=:\mathrm{deg}}$. In particular, an element of $B$ is contained in the kernel of $D$ if and only if it is constant as a polynomial in $t$.

Without loss of generality it can be assumed 
that $$\mathrm{deg}(f_1)\geq \dots \geq \mathrm{deg}(f_m).$$ Then for all $i=1,\dots,m$ and $j=1,\dots,n_i$ we have 
$$\mathrm{deg}(b_{ij})=\frac{k_{ij} \mathrm{deg}(b_{ij})}{k_{ij}}\leq \frac{\sum_{\iota=1}^{n_i} k_{i\iota} \mathrm{deg}(b_{i\iota})}{k_{ij}}=\frac{\mathrm{deg}(f_i)}{k_{ij}}\leq \frac{\mathrm{deg}(f_1)}{k_{ij}}.$$ 

The proof is by reductio ad absurdum. Assume that there exists a positive integer $i$, $1\leq i\leq m$, such that $\mathrm{deg}(f_i)>0$. From the conditions of Theorem~\ref{mysumeq0}, elements of any subset $\{f_{i_1},\dots,f_{i_s}\} \subseteq \{f_1,\dots,f_m\}$ are pairwise relatively prime. By Lemma~\ref{vzprost} they are pairwise relatively prime as elements of $K[t]$. Then $\mathrm{gcd}(f_{i_1},\dots,f_{i_s})=1$ for each subset ${\{f_{i_1},\dots,f_{i_s}\} \subseteq \{f_1,\dots,f_m\}}$.

By Theorem~\ref{genMSABC} we obtain: 
$$\mathrm{deg}(f_1)\leq (m-2)(N(f_1)+\dots+N(f_m)-1)\leq $$ $$\leq (m-2)\left(\sum_{i=1}^{m} \sum_{j=1}^{n_i} N(b_{ij})-1\right)\leq (m-2)\left( \sum_{i=1}^{m} \sum_{j=1}^{n_i}\mathrm{deg}(b_{ij})-1\right)\leq $$ $$\leq (m-2)\left( \mathrm{deg}(f_1) \sum_{i=1}^{m} \sum_{j=1}^{n_i} \frac{1}{k_{ij}} -1\right).$$
Therefore, we have $$\mathrm{deg}(f_1)\left( \frac{1}{m-2} - \sum_{i=1}^{m} \sum_{j=1}^{n_i} \frac{1}{k_{ij}} \right)\leq -1<0.$$ This is a contradiction. We have that the element $f_i$ belongs to the kernel of $D$ for all $i=1,\dots,m$. Since the kernel of a locally nilpotent derivation is factorially closed, we obtain that the element $b_{ij}$ is contained in the kernel of~$D$ for any $i=1,\dots,m$ and $j=1,\dots,n_i$.
\end{proof}

\begin{re}
    Theorem~\ref{mysumeq0} generalizes Theorem~\ref{ABC}: put $m=3,\ n_1=n_2=n_3 = 1$ and $a_1=a_2=a_3= 1$. Then the condition $$\sum_{i=1}^{m} \sum_{j=1}^{n_i} \frac{1}{k_{ij}}\leq \frac{1}{m-2}$$ changes to $$a^{-1}+b^{-1}+c^{-1}\leq 1$$ from Theorem~\ref{ABC}.
\end{re}

\begin{thm}
\label{mysumneq0}
Suppose that $f_1+\dots+f_m\in \mathrm{Ker}\ D \backslash \{ 0\}$ and for all $s\leq m$ and ${1\leq i_1<\dots<i_s\leq m}$ we have $$f_{i_1}+\dots+f_{i_s}=0 \Longrightarrow f_{i_1},\dots,f_{i_s}\ \textit{are\ pairwise\ relatively\ prime}.$$ Assume that $$\sum_{i=1}^{m} \sum_{j=1}^{n_i} \frac{1}{k_{ij}} \leq \frac{1}{m-1}.$$

Then the element $b_{ij}$ is in the kernel of $D$ for all $i=1,\dots,m$ and $j=1,\dots,n_i$.
\end{thm}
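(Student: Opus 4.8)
The plan is to follow the proof of Theorem~\ref{mysumeq0} almost verbatim, the one new idea being to turn the sum $f_1+\dots+f_m$, which need not vanish, into a genuinely vanishing sum of $m+1$ terms by adjoining $f_0:=-(f_1+\dots+f_m)$; this is exactly why the threshold on $\sum 1/k_{ij}$ relaxes from $\frac{1}{m-2}$ to $\frac{1}{m-1}$. First I would dispose of the case $D=0$, where $\mathrm{Ker}\ D=B$ and there is nothing to prove. Assuming $D\neq 0$, I choose a local slice $t$ and, via Theorem~\ref{thmslice}, embed $B\subseteq K[t]=K^{[1]}$ with $K=\mathrm{Quot}(\mathrm{Ker}\ D)$, so that $\mathrm{deg}_t=\mathrm{deg}_D=:\mathrm{deg}$ and an element of $B$ lies in $\mathrm{Ker}\ D$ if and only if it is constant as a polynomial in $t$. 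Then I put $f_0:=-(f_1+\dots+f_m)$; by hypothesis $f_0\in\mathrm{Ker}\ D\setminus\{0\}$, so $f_0\in K^{*}$ is a unit of $K[t]$ with $\mathrm{deg}(f_0)=0$ and $N(f_0)=0$, and $f_0+f_1+\dots+f_m=0$.

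Next I would verify that the tuple $(f_0,f_1,\dots,f_m)$ satisfies the hypotheses of Theorem~\ref{genMSABC} (with $n=m+1\geq 3$). Arguing by contradiction, suppose some $\mathrm{deg}(f_i)>0$; otherwise all $f_i$, and then, by factorial closedness of $\mathrm{Ker}\ D$, all $b_{ij}$, lie in $\mathrm{Ker}\ D$ and we are done. In particular the $f_k$ are not all constant. Now let $\{f_{i_1},\dots,f_{i_s}\}\subseteq\{f_0,f_1,\dots,f_m\}$ sum to zero. If $f_0$ is not among them, this is a sub-collection of $\{f_1,\dots,f_m\}$ summing to zero, so by the assumption of the theorem its members are pairwise relatively prime in $B$, hence pairwise relatively prime in $K[t]$ by Lemma~\ref{vzprost}, hence of gcd $1$. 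If $f_0$ is among them, then, being a unit of $K[t]$, it forces the gcd of the sub-collection to be $1$ as well. Thus Theorem~\ref{genMSABC} applies.

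Finally, order the indices so that $\mathrm{deg}(f_1)\geq\dots\geq\mathrm{deg}(f_m)$. Since $\mathrm{deg}(f_0)=0$ and $N(f_0)=0$, Theorem~\ref{genMSABC} yields
$$\mathrm{deg}(f_1)\leq (m-1)\bigl(N(f_1)+\dots+N(f_m)-1\bigr).$$
From here the estimates are exactly those in the proof of Theorem~\ref{mysumeq0}: each $a_i$ is a unit of $K[t]$, so $N(f_i)\leq\sum_{j=1}^{n_i}N(b_{ij})\leq\sum_{j=1}^{n_i}\mathrm{deg}(b_{ij})$, and $\mathrm{deg}(b_{ij})\leq\mathrm{deg}(f_i)/k_{ij}\leq\mathrm{deg}(f_1)/k_{ij}$, which chains to
$$\mathrm{deg}(f_1)\Bigl(\tfrac{1}{m-1}-\sum_{i=1}^{m}\sum_{j=1}^{n_i}\tfrac{1}{k_{ij}}\Bigr)\leq -1<0.$$
This contradicts $\sum_{i,j}1/k_{ij}\leq\frac{1}{m-1}$ together with $\mathrm{deg}(f_1)>0$. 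Hence every $f_i$ is constant, so $f_i\in\mathrm{Ker}\ D$, and factorial closedness of the kernel gives $b_{ij}\in\mathrm{Ker}\ D$ for all $i,j$.

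The only genuinely new point, and the step I expect to require the most care, is the gcd verification: one must be sure that adjoining $f_0$ cannot spoil the hypothesis of Theorem~\ref{genMSABC}. This is harmless precisely because $f_0$ is a unit of $K[t]$ and hence coprime to everything; the remainder is a transcription of the proof of Theorem~\ref{mysumeq0} with $m-2$ replaced by $m-1$.
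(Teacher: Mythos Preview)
Your proposal is correct and follows essentially the same route as the paper: adjoin the negative of the sum as an extra term (the paper calls it $f_{m+1}$, you call it $f_0$), note it is a nonzero constant in $K[t]$ so contributes nothing to degree or root count, and then rerun the estimates of Theorem~\ref{mysumeq0} with $m-2$ replaced by $m-1$. If anything you are slightly more explicit than the paper about why subsets containing the adjoined term automatically have $\gcd=1$ (because that term is a unit of $K[t]$); the paper checks this only for subsets of $\{f_1,\dots,f_m\}$ and leaves the remaining case implicit.
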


\begin{proof}
We apply the argument utilized in the proof of Theorem~\ref{mysumeq0}. There exists an embedding $B\subseteq K[t]$, where $K$ is the quotient field of the kernel of the locally nilpotent derivation $D$ and $t$ is a local slice of $D$.

As above, we assume that $$\mathrm{deg}(f_1)\geq \dots \geq \mathrm{deg}(f_m),$$ therefore, 
$$\mathrm{deg}(b_{ij})=\frac{k_{ij} \mathrm{deg}(b_{ij})}{k_{ij}}\leq \frac{\sum_{\iota=1}^{n_i} k_{i\iota} \mathrm{deg}(b_{i\iota})}{k_{ij}}=\frac{\mathrm{deg}(f_i)}{k_{ij}}\leq \frac{\mathrm{deg}(f_1)}{k_{ij}}$$ for all $i=1,\dots,m$ and $j=1,\dots,n_i$. 

We denote $f_{m+1}:=-(f_1+\dots+f_m)$. Note that $f_{m+1}$ is a nonzero element of $K$. Hence, $\mathrm{deg}(f_{m+1})=N(f_{m+1})=0$. We assume again that there exists a positive integer $i$ such that $\mathrm{deg}(f_i)>0$. From the conditions of Theorem~\ref{mysumneq0} if $f_{i_1}+\dots=f_{i_s}=0$ then $f_{i_1},\dots,f_{i_s}$ are pairwise relatively prime in $B$. By Lemma~\ref{vzprost}, for every subset $\{f_{i_1},\dots,f_{i_s}\}\subseteq \{f_1,\dots f_m\}$ such that $f_{i_1}+\dots+f_{i_s}=0$ we have $\mathrm{gcd}(f_{i_1},\dots,f_{i_s})=1$  considered as polynomials in $t$. By Theorem~\ref{genMSABC}, applied to $f_1+\dots+f_m+f_{m+1}=0$, we have
$$\mathrm{deg}(f_1)\leq (m-1)(N(f_1)+\dots+N(f_m)+N(f_{m+1})-1)\leq $$ $$ \leq (m-1)\left(\sum_{i=1}^{m} \sum_{j=1}^{n_i} N(b_{ij})-1\right)\leq  (m-1)\left( \sum_{i=1}^{m} \sum_{j=1}^{n_i}\mathrm{deg}(b_{ij})-1\right)\leq $$ $$ \leq(m-1)\left( \mathrm{deg}(f_1) \sum_{i=1}^{m} \sum_{j=1}^{n_i} \frac{1}{k_{ij}} -1\right).$$
Therefore, we obtain $$\mathrm{deg}(f_1)\left( \frac{1}{m-1} - \sum_{i=1}^{m} \sum_{j=1}^{n_i} \frac{1}{k_{ij}} \right)\leq -1<0.$$ This is a contradiction. We get that $f_1,\dots,f_m$ are in the kernel of $D$. Since the kernel of a locally nilpotent derivation is factorially closed, we get that every $b_{ij}$ is contained in the kernel of $D$.
\end{proof}

\begin{re}
    In case $B$ is a factorial algebra, the condition on $f_1,\dots,f_m$ to be pairwise relatively prime in Theorem~\ref{mysumeq0} can be replaced by the following two conditions: $\mathrm{gcd}(f_1,\dots,f_m)=1$ and for any positive integer $s \leq m$ for all $1\leq i_1<\dots<i_s\leq m$ $$f_{i_1}+\dots+f_{i_s}=0 \Longrightarrow \mathrm{gcd}(f_{i_1},\dots,f_{i_s})=0,$$ because it suffices for applying Theorem~\ref{genMSABC}. This relates Corollary~\ref{RIGIDmysumeq0} if $B/(F)$ is a UFD, and Corollary~\ref{MLmysumeq0}, if $B$ is a UFD.

    In Theorem~\ref{mysumneq0}, if $B$ is a factorial algebra then the condition that $f_{i_1},\dots,f_{i_s}$ are pairwise relatively prime can be replaced by $\mathrm{gcd}(f_{i_1},\dots,f_{i_s})=1$.
\end{re}

\section{Applications}

\subsection{Construction of rigid algebras}

Suppose $m,n_1,\dots,n_m$ are positive integers, where $m\geq 3$, $a_{i}\in \mathbb{K}^{\times}$, $k_{ij}$ are positive integers and $b_{ij}\in B$, where $i=1,\dots,m$ and $j=1,\dots,n_i$. Assume that elements $b_{ij}$ generate $B$ as an algebra.

We denote $$F_i:=a_i b_{i1}^{k_{i1}}\dots b_{in_i}^{k_{in_i}} \in B,$$
where $i=1,\dots,m.$

Theorem~\ref{mysumeq0} implies

\begin{sled}
\label{RIGIDmysumeq0}
Let $F:=F_1+\dots+F_m$ be a prime element of $B$. Denote by~$f_i$ the image of $F_i$ in the factor-algebra $B/(F)$. Assume that $f_1,\dots,f_m$ are pairwise relatively prime and $$\sum_{i=1}^{m} \sum_{j=1}^{n_i} \frac{1}{k_{ij}}\leq \frac{1}{m-2}.$$

Then the algebra $B/(F)$ is rigid.
\end{sled}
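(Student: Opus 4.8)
The plan is to deduce Corollary~\ref{RIGIDmysumeq0} directly from Theorem~\ref{mysumeq0} by a proof by contradiction. Suppose the algebra $B/(F)$ is not rigid; then there exists a nonzero locally nilpotent derivation $D$ on $B/(F)$. The strategy is to show that the images $b_{ij}$ of the generators $b_{ij}$ in $B/(F)$ satisfy exactly the hypotheses of Theorem~\ref{mysumeq0}, so that all the $b_{ij}$ lie in $\mathrm{Ker}\ D$, and then to observe that since the $b_{ij}$ generate $B/(F)$, this forces $D=0$, a contradiction.

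The key steps, in order, are as follows. First, I would note that $B/(F)$ is a domain because $F$ is a prime element of $B$; this is needed so that the whole apparatus of Section~2 (local slices, the embedding into $K[t]$, factorial closedness of kernels) applies. Second, in $B/(F)$ we have the relation $f_1+\dots+f_m = 0$, since $F_1+\dots+F_m = F$ maps to $0$; here each $f_i$ is the image of $F_i = a_i b_{i1}^{k_{i1}}\cdots b_{in_i}^{k_{in_i}}$, so $f_i = a_i b_{i1}^{k_{i1}}\cdots b_{in_i}^{k_{in_i}}$ has precisely the monomial form required in the setup preceding Theorem~\ref{mysumeq0} (the coefficients $a_i \in \mathbb{K}^\times$ are certainly in $\mathrm{Ker}\ D$ and nonzero). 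Third, the corollary's hypotheses are that $f_1,\dots,f_m$ are pairwise relatively prime in $B/(F)$ and that $\sum_{i,j} k_{ij}^{-1} \leq (m-2)^{-1}$, which are exactly the remaining hypotheses of Theorem~\ref{mysumeq0} applied to the domain $B/(F)$ with the derivation $D$. Fourth, applying Theorem~\ref{mysumeq0} gives $b_{ij} \in \mathrm{Ker}\ D$ for all $i,j$. Fifth, since $\mathrm{Ker}\ D$ is a subalgebra of $B/(F)$ containing the algebra generators $b_{ij}$, we get $\mathrm{Ker}\ D = B/(F)$, i.e.\ $D = 0$, contradicting the choice of $D$. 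Hence no nonzero locally nilpotent derivation exists on $B/(F)$, which means $\mathrm{ML}(B/(F)) = B/(F)$, i.e.\ $B/(F)$ is rigid.

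There is essentially no hard analytic or combinatorial step here: the entire difficulty has already been absorbed into Theorem~\ref{mysumeq0}, and this corollary is a routine packaging of that theorem. The only point requiring a word of care is the transition in the setup of Theorem~\ref{mysumeq0} from $a_i$ being arbitrary nonzero kernel elements to $a_i$ being nonzero scalars in $\mathbb{K}^\times$ — the latter is a special case of the former, since $\mathbb{K}^\times \subseteq \mathrm{Ker}\ D \setminus \{0\}$ automatically (every derivation kills constants), so the application is legitimate. A secondary, slightly subtle point worth spelling out is why $\mathrm{Ker}\ D$ containing all the $b_{ij}$ forces $D$ to vanish: $\mathrm{Ker}\ D$ is always a $\mathbb{K}$-subalgebra of $B/(F)$, and any subalgebra containing a generating set of the whole algebra must be the whole algebra, whence $D$ is zero on all of $B/(F)$.
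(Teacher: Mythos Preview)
Your proposal is correct and follows exactly the route the paper intends: the paper states Corollary~\ref{RIGIDmysumeq0} as an immediate consequence of Theorem~\ref{mysumeq0} with no further argument, and you have simply spelled out the routine details of that implication. Nothing is missing or different.
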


To apply Corollary~\ref{RIGIDmysumeq0} it is necessary to check the pairwise relative primeness of those elements whose sum in the factor-algebra equals to zero. The following lemma can be used to prove this.

\begin{lem}
\label{mylemma}
Under the previous notation, if elements $b_{ij}$ are algebraically independent, then $f_1,\dots ,f_m$ are pairwise relatively prime.

\end{lem}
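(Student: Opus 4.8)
The plan is as follows. Since the $b_{ij}$ generate $B$ and are algebraically independent, we may identify $B$ with the polynomial ring on the variables $b_{ij}$; then each $F_i=a_ib_{i1}^{k_{i1}}\cdots b_{in_i}^{k_{in_i}}$ is a nonzero scalar times a non-constant monomial, and, all of the $b_{ij}$ being distinct variables, the monomials $F_1,\dots,F_m$ involve pairwise disjoint sets of variables. Fix a pair $i\neq j$; the goal is to prove $(f_i)\cap(f_j)=(f_if_j)$ in the domain $R:=B/(F)$. The first step will be the elementary remark that for nonzero $a,b$ in a domain one has $(a)\cap(b)=(ab)$ if and only if $a$ is a non-zero-divisor on $R/(b)$: indeed $(a)\cap(b)=a\bigl((b):a\bigr)$ in any commutative ring, and in a domain the equality $a\bigl((b):a\bigr)=a(b)$ forces $\bigl((b):a\bigr)=(b)$, i.e.\ multiplication by $a$ is injective on $R/(b)$. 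Applied to $a=f_i$, $b=f_j$, and using $R/(f_j)=B/(F_j,F)$, this reduces the lemma to showing that $F_i$ is a non-zero-divisor on $B/(F_j,F)$.

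The second step will decouple the two relations. Set $F^{(j)}:=F-F_j=\sum_{k\neq j}F_k$, so that $(F_j,F)=(F_j,F^{(j)})$. The key observation is that $F^{(j)}$ involves none of the variables occurring in $F_j$ (each summand $F_k$, $k\neq j$, lives in its own block of variables, disjoint from that of $F_j$), while $F_i$, being one of these summands, also avoids the variables of $F_j$. Writing $Y$ for the variables occurring in $F_j$ and $Z$ for the remaining ones, we get $F_j\in\mathbb{K}[Y]$ and $F^{(j)},F_i\in\mathbb{K}[Z]$, whence
$$B/(F_j,F)\;=\;\mathbb{K}[Y,Z]/(F_j,F^{(j)})\;\cong\;\bigl(\mathbb{K}[Z]/(F^{(j)})\bigr)\otimes_{\mathbb{K}}\bigl(\mathbb{K}[Y]/(F_j)\bigr).$$
Since $F_j$ is a non-constant monomial the second tensor factor is nonzero, and such a tensor product is a free module over its first factor, so $F_i$ is a non-zero-divisor on the left-hand ring if and only if the image of $F_i$ is a non-zero-divisor in $\mathbb{K}[Z]/(F^{(j)})$.

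The last step will verify exactly this inside the polynomial ring $\mathbb{K}[Z]$. As $F_i$ is a scalar times a monomial, its prime divisors are among the finitely many variables of $Z$ occurring in $F_i$, and I will check that none of them divides $F^{(j)}$. If such a variable $x$ divided $F^{(j)}=F_i+\sum_{k\neq i,j}F_k$, it would divide $\sum_{k\neq i,j}F_k$; but that sum is nonzero — here one uses $m\geq 3$ together with the fact that distinct monomials (in particular, monomials in pairwise disjoint sets of variables) are linearly independent over $\mathbb{K}$ — and it does not involve $x$ at all, so it is unchanged upon setting $x=0$, a contradiction. Hence $\gcd(F_i,F^{(j)})=1$ in the UFD $\mathbb{K}[Z]$ (and $F^{(j)}\neq0$), and coprimality gives that $F^{(j)}\mid F_ig$ implies $F^{(j)}\mid g$, i.e.\ $F_i$ is a non-zero-divisor modulo $F^{(j)}$. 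Chaining the three reductions proves the lemma. The main difficulty I anticipate is that neither $B/(F)$ nor $\mathbb{K}[Z]/(F^{(j)})$ need be a UFD — or even normal — so relative primeness cannot simply be read off from greatest common divisors in these rings; recasting it as a non-zero-divisor condition and then peeling off the variable block of $F_j$ is what brings the problem down to an honest coprimality statement in a polynomial ring, where the disjoint-variable structure does the work.
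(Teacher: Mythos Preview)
Your argument is correct and genuinely different from the paper's. The paper proves $(f_1)\cap(f_2)\subseteq(f_1f_2)$ by a direct element chase: it lifts $g\in(f_1)\cap(f_2)$ to the polynomial ring, writes $G=H_1F_1+P_1F=H_2F_2+P_2F$, splits $H_1$ and $P_1-P_2$ into parts divisible by the monomial $F_2$ and parts whose monomials are not, and then uses that $F_1$ and $F_1+F_3+\cdots+F_m$ involve no variable of $F_2$ to force the residual expression to vanish; from this it extracts $F_1\mid\widetilde P$ and eventually $g\in(f_1f_2)$. Your route instead recasts relative primeness as the non-zero-divisor condition $((f_j):f_i)=(f_j)$, replaces the ideal $(F,F_j)$ by $(F^{(j)},F_j)$, and exploits the separation of variables through the tensor decomposition $\mathbb{K}[Z]/(F^{(j)})\otimes_{\mathbb{K}}\mathbb{K}[Y]/(F_j)$, reducing everything to the coprimality $\gcd(F_i,F^{(j)})=1$ in the UFD $\mathbb{K}[Z]$. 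Both proofs hinge on exactly the same structural fact---that the $F_k$ live in pairwise disjoint blocks of variables---but the paper leverages it through monomial bookkeeping, while you leverage it through a base-change that isolates the $j$-th block. Your version is more conceptual and makes the role of the disjoint variables completely transparent; the paper's version is more elementary in that it avoids colon ideals and tensor products, at the cost of a longer hands-on computation. One minor remark: in your step~1 you state an ``iff'', but only the implication from ``$f_i$ is a non-zero-divisor modulo $f_j$'' to $(f_i)\cap(f_j)=(f_if_j)$ is needed, and that direction holds in any commutative ring, so your argument does not actually require $B/(F)$ to be a domain---matching the paper's proof, which also never uses primality of $F$.
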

\begin{proof}
Without loss of generality we prove that elements $f_1$ and $f_2$ are relatively prime. It suffices to show that there is an embedding $(f_1) \cap (f_2)\subseteq (f_1 f_2)$. Take an element $g\in (f_1) \cap (f_2)$. Let $G$ be a preimage of $g$ in the polynomial ring $\mathbb{K}[
b_{11},\dots, b_{mn_m}
]$, $$G=H_1 F_1 +P_1 F= H_2 F_2 +P_2 F$$ for some polynomials $H_1,H_2,P_1,P_2\in \mathbb{K}[
b_{11},\dots, b_{mn_m}
].$ 
Hence,
    $$H_1 F_1 + (P_1 - P_2) F = H_2 F_2 = G - P_2 F.\eqno{(1)}$$

Monomials in the polynomial $P_1-P_2$ can be divided into two parts. The first part contains the terms divisible by $F_2$ and the second part consists of the rest monomials. The same is true for the polynomial $H_1$.

Therefore, these polynomials can be represented in the following form: 
    $$P_1 - P_2 = P_0 F_2 + \widetilde{P},\ H_1 = H_0 F_2 + \widetilde{H},  \eqno{(2)}$$
where $\widetilde{P}$ and $\widetilde{H}$ have no monomials divisible by $F_2$.

Combining (2) and (1), we obtain
$$F_2\ |\ (H_0 F_2 + \widetilde{H}) F_1 + (P_0 F_2 + \widetilde{P})F.$$ Subtracting elements divisible by $F_2$, we have $$F_2\ |\ \widetilde{H} F_1 + \widetilde{P} (F_1+F_3+\dots+F_m),$$ while $F_1$ and $F_1+F_3+\dots+F_m$ do not contain variables from $F_2$. Thus, we get $$\widetilde{H} F_1 + \widetilde{P} (F_1+F_3+\dots+F_m)=0,$$ and $F_1\ |\ \widetilde{P}$. Let $\widetilde{P}=\widehat{P}_0 F_1$. Then from decomposition (2) we obtain $P_1-P_2 = \widehat{P}_0 F_1 + P_0 F_2$. 

Using (1) we get that the polynomial $F_2$ divides the polynomial
$$H_1 F_1 + (\widehat{P}_0 F_1 + P_0 F_2) F = H_2 F_2 = G - P_2 F=$$
$$= (H_1 + \widehat{P}_0 F)F_1 + P_0 F_2 F,$$ hence, $F_2\ |\ H_1 + \widehat{P}_0 F$. So, $G - P_2 F - P_0 F_2 F \in (F_1 F_2)$ and $g\in (f_1 f_2)$. This completes the proof of the relatively primeness of $f_1$ and $f_2$.
\end{proof}

\begin{re}
In case $m=3$ the statement of Lemma~\ref{mylemma} follows from~\cite{HH}. Recall the necessary definitions. Let $R$ be an algebra graded by a finitely generated abelian group $K$. An element $r\in R$ is called a $K$\textit{-prime} if $r$ is a homogeneous nonzero nonunit such that if $r$ divides a product of some homogeneous elements, then it divides one of the factors. An algebra $R$ is called \textit{factorially graded} if every homogeneous nonzero nonunit is a product of $K$-primes. This decomposition is unique up to multiplication by a unit and permutation of factors.

Let us prove Lemma~\ref{mylemma} in case $m=3$ using results of~\cite{HH}. We denote by $\widetilde{b}_{ij}$ the image of $b_{ij}$ in the factor-algebra $B/(F)$. Note that $B/(F)$ is factorially graded with respect to the abelian finitely generated group $K$ (see~\cite[Theorem~10.1(i)]{HH}), and $\widetilde{b}_{ij}$ are pairwise nonassociated $K$-primes (see~\cite[Theorem~10.4(i)]{HH}). If a homogeneous element $g\in B/(F)$ is contained in the intersection of the ideals $(f_i)$ and $(f_j)$, then it has the form $$g=a_i \widetilde{b}_{i1}^{k_{i1}}\dots \widetilde{b}_{in_i}^{k_{in_i}} h_i = a_j \widetilde{b}_{j1}^{k_{j1}}\dots \widetilde{b}_{jn_j}^{k_{jn_j}} h_j$$ for some elements $h_i, h_j\in B/(F)$. Since a decomposition into $K$-primes is unique, we obtain $f_i|h_j$ and $f_j|h_i$. Therefore, the element $g$ belongs to the ideal~$(f_i f_j)$. Every element of the factor-algebra $B/(F)$ can be represented as the sum of homogeneous elements, hence, $(f_i)\cap (f_j)=(f_i f_j)$. The relative primeness of $f_i$ and $f_j$ is proved.
\end{re}

\begin{example}
\label{trinom}
(Trinomial hypersurfaces) 
Suppose $n_0,n_1$ and $n_2$ are positive integers, $l_{ij}$ are non-negative integers, where $0\leq i \leq 2$ and $1\leq j \leq n_i$.
We denote $$T_i^{l_i}:=T_{i1}^{l_{i1}}\dots T_{in_i}^{l_{in_i}}\in \mathbb{K}[
T_{ij};\ 0\leq i\leq 2,\ 1\leq j\leq n_i
],$$ where $i=0,1,2$. Let $X$ be a trinomial hypersurface with coordinate ring 
$$\mathbb{K}[X] \simeq \mathbb{K}[
T_{01},\dots, T_{2n_2}
]/(
T_0^{l_0}+T_1^{l_1}+T_2^{l_2}).$$
We assume that $\sum l_{ij}^{-1}\leq 1$. By Lemma~\ref{mylemma} we have that the images of $T_0^{l_0}$, $T_1^{l_1}$ and $T_2^{l_2}$ in the factor-algebra are relatively prime. Hence, the algebra $\mathbb{K}[X]$ satisfies the conditions of Corollary~\ref{RIGIDmysumeq0}. By Corollary~\ref{RIGIDmysumeq0} we obtain that the hypersurface $X$ is rigid. 

In this example our result confirms an already known fact. We denote ${d_i:=\text{gcd}(l_{i1},\dots,l_{in_i})}$. If $d_i$ are relatively prime, then we have a factorial trinomial hypersurface, its rigidity follows from ~\cite{Ar16}. Otherwise rigidity of $X$ follows from~\cite{SA}.

In particular, a hypersurface given by the equation $$\{X_1^6X_2^7+Y_1^8Y_2^9+Z_1^{10}Z_2^{11}=0\}$$ in the 6-dimensional affine space with coordinates $X_1$, $X_2$, $Y_1$, $Y_2$, $Z_1$ and $Z_2$ is rigid by Corollary~\ref{RIGIDmysumeq0}. The relative primeness of the images of $X_1^6X_2^7,\ Y_1^8Y_2^9$ and $Z_1^{10}Z_2^{11}$ in the factor-algebra $$\mathbb{K}[X_1,X_2,Y_1,Y_2,Z_1,Z_2]/(X_1^6X_2^7+Y_1^8Y_2^9+Z_1^{10}Z_2^{11})$$ follows from Lemma~\ref{mylemma}.  It is easy to see that the condition $$\frac{1}{6}+\frac{1}{7}+\frac{1}{8}+\frac{1}{9}+\frac{1}{10}+\frac{1}{11}\leq 1$$ holds.
\end{example}

\begin{example} (Trinomial varieties) The class of trinomial varieties is more general than the class of trinomial hypersurfaces. Trinomial varieties are algebraic varieties given by аn appropriate system of trinomials. We use the notation of~\cite{HH}.

For a positive integer $r\geq 1$ let $A=(a_0,\dots,a_r)$ be a sequence of vectors $a_i=(b_i,c_i)\in\mathbb{K}^2$ such that the pair of vectors $(a_i,a_k)$ is linearly independent for any $i\neq k$. Suppose $\mathfrak{n}=(n_0,\dots, n_r)$ is a sequence of positive integers, $L=(l_{ij})$ is a set of positive integers, where $0\leq i\leq r$ and $1\leq j\leq n_i$. For any $0\leq i \leq r$ we define a monomial $$T_i^{l_i}:= T_{i1}^{l_{i1}}\dots T_{in_i}^{l_{in_i}}\in S:= \mathbb{K}[T_{ij};\ 0\leq i \leq r, 1\leq j\leq n_i].$$ Given any pair of indices $0\leq i,j \leq r$ put $\alpha_{ij}:=\mathrm{det}(a_i,a_j)=b_i c_j - b_j c_i$, and for any triple of indices $0\leq i< j< k\leq r$ let $$g_{i,j,k}:=\alpha_{jk}T_i^{l_i} + \alpha_{ki}T_j^{l_j} +\alpha_{ij}T_k^{l_k}\in S.$$
A \textit{trinomial variety} is an affine algebraic variety $X$ with coordinate ring $$\mathbb{K}[X] \simeq R(A,\mathfrak{n}, L):= \mathbb{K}[T_{ij},\ 0\leq i\leq r,\ 1
\leq j \leq n_i]/(g_{i,i+1,i+2};\ 0\leq i\leq r-2).$$
We denote by $t_{ij}$ the image of $T_{ij}$ in the factor-algebra $R(A, \mathfrak{n}, L)$. 
In addition we assume that for every trinomial the sum of the inverse values to all powers of the variables included in this trinomial is less than or equal to 1. To apply Corollary~\ref{RIGIDmysumeq0} it remains to show the relative primeness of $t_i^{l_i}=t_{i1}^{l_{i1}}\dots t_{in_i}^{l_{in_i}}$. This follows from the fact that $\mathbb{K}[X]$ is factorially graded, see~\cite[Theorem~1.1(i)]{HH}.

A criterion for a trinomial variety to be factorial is that numbers $d_i=\text{gcd}(l_{i1},\dots,l_{in_i})$ are pairwise relatively prime, see~\cite[Theorem~1.1(ii)]{HH}. A criterion for a factorial trinomial variety to be rigid is obtained in~\cite{SA}. Any criterion for a non-factorial trinomial variety to be rigid is unknown yet. 

It is easy to construct a non-factorial trinomial variety that is rigid by Theorem~\ref{mysumeq0}. For example, a variety $X$ with coordinate ring $$\mathbb{K}[X]\simeq \mathbb{K}[X_1,X_2,Y,Z_1,Z_2,W_1,W_2]/(X_1^6 X_2^{12}+Y^7 +Z_1^6 Z_2^9,\ X_1^6 X_2^{12}+2Y^7 +W_1^8 W_2^9)$$ satisfies these conditions.
Denote by $x_1,x_2,y,z_1,z_2,w_1$ and $w_2$ the images of the elements $X_1,X_2,Y,Z_1,Z_2,W_1$ and $W_2$ in the factor-algebra.
Since $\mathbb{K}[X]$ is factorially graded, the elements $$x_1^6 x_2^{12}, y^7, z_1^6 z_2^9\ \mathrm{and}\ x_1^6 x_2^{12}, 2y^7, w_1^8 w_2^9$$ are pairwise relatively prime. It is clear that $$\frac{1}{6}+\frac{1}{12}+\frac{1}{7}+\frac{1}{6}+\frac{1}{9}\leq 1 \ \mathrm{and}\  \frac{1}{6}+\frac{1}{12}+\frac{1}{7}+\frac{1}{8}+\frac{1}{9}\leq 1.$$

\end{example}

\begin{example}($m$-term hypersurfaces) 
An affine algebraic variety is called an
\textit{$m$-term hypersurface} if its coordinate ring is isomorphic to a polynomial ring factorized by an ideal generated by a polynomial with $m$ monomials.

In case $b_{ij}$ are algebraically independent and $B$ is a polynomial ring, elements $f_1,\dots,f_m$ are pairwise relatively prime by Lemma~\ref{mylemma}. Assume that $$\sum k_{ij}^{-1}\leq \frac{1}{m-2}.$$ Then the $m$-term hypersurface with coordinate ring isomorphic to $B/(F)$ is rigid by Corollary~\ref{RIGIDmysumeq0}.

For example, the factor-algebra 
$$B=\mathbb{K}[X,Y,Z,V,W]/(X^{10} + Y^{10} Z^{11} + V^{10}+W^{10})$$ is rigid. The pairwise relative primeness of the images of the monomials $X^{10},Y^{10}Z^{11},V^{10}$ and $W^{10}$ in the factor-algebra follows from Lemma~\ref{mylemma}. The condition $$\frac{1}{10}+\frac{1}{10}+\frac{1}{11}+\frac{1}{10}+\frac{1}{10}\leq \frac{1}{2}$$ holds.
\end{example}

\begin{example}
Suppose $F_1,F_2,F_3$ and $F$ are polynomials in variables $X,Y,Z,V$ and $W$ over the field $\mathbb{K}$: $$F_1:=XY+Z,\ F_2:=Y^6 +Z^6 +W^6+ V,\ F_3:=Y^6 +Z^6 +W^6- V,$$ we denote $F:=F_1^3+F_2^3+F_3^3$. An algebra $B$ is given by
$$B=\mathbb{K}[X,Y,Z,V,W]/(F).$$ Let $f_1, f_2$ and $f_3$ be the images of $F_1, F_2$ and $F_3$ in the factor-algebra.

Let us prove that the algebra $B$ is rigid. It is necessary to check the pairwise relative primeness of the elements $f_1^3, f_2^3$ and $f_3^3$. It follows from the pairwise relative primeness of the elements $f_1, f_2$ and $f_3$, because if some elements $a,b\in B$ are relatively prime and $m,n \in \mathbb{Z}_{\geq 1}$, then elements $a^m$ and $b^n$ are relatively prime too, see~\cite{fr}.

Let $g\in (f_1) \cap (f_2)$. Consider a preimage of $g$ in the polynomial ring. It has the form $$G=H_1 F_1 + P_1 F=H_2 F_2 + P_2 F$$ for some polynomials $H_1, H_2, P_1$ and $P_2$. 

Therefore, we have
    $$G-P_2 F=H_1 F_1 + (P_1-P_2)F = H_2 F_2.\eqno{(3)}$$

We reduce all parts of equation~(3) by the polynomial $F_2$ according to the lexicographic order $V>W>X>Y>Z$. This means that we replace $V$ with $-Y^6-Z^6-W^6$. We get that $F_1$ does not change, $F_2$ reduces to zero, and $F$ changes to $F_1^3 + 8(Y^6+Z^6+W^6)^3$. Denote by $\widetilde{H}_1,\ \widetilde{H}_2,\ \widetilde{P}$ and $\widetilde{G}$ the images of the polynomials $H_1,\ H_2,\ P_1-P_2$ and $G$ under the reduction. From~(3) it follows that 
$$\widetilde{H}_1 F_1 + \widetilde{P}(F_1^3 + 8(Y^6+Z^6+W^6)^3) = 0,$$ hence $F_1\ |\ \widetilde{P}$. The polynomial $\widetilde{P}$ is obtained from $P_1-P_2$ by the reduction using $F_2$. Therefore, the polynomial $P_1-P_2$ has the form $\widehat{P}_1 F_1+\widehat{P}_2 F_2$ for some polynomials $\widehat{P}_1$ and $\widehat{P}_2$.

Combining with~(3), we get
$$ G-P_2 F=H_1 F_1 + (\widehat{P}_1 F_1+\widehat{P}_2 F_2)F = H_2 F_2,$$ whence 
$F_2\ |\ H_1 F_1+ \widehat{P}_1 F_1 F$. Therefore, $F_2\ |\ H_1+ \widehat{P}_1 F$ and $G-P_2 F - \widehat{P}_2 F_2 F \in (F_1 F_2) $. We obtain $g\in (f_1 f_2)$. The relative primeness of the elements $f_1$ and $f_2$ is proved. The relative primeness of $f_1$ and $f_3$ can be proved in the same way. 

It remains to show that the elements $f_2$ and $f_3$ are relatively prime. We consider an element $g$ belonging to the intersection of the ideals generated by $f_2$ and $f_3$, i.e., ${g\in (f_2)\cap (f_3)}$. Any preimage of $g$ in the polynomial ring has the form 
$$G=H_2 F_2 + P_2 F = H_3 F_3 + P_3 F \eqno{(4)}$$for some polynomials $H_2,H_3,P_2$ and $P_3$.
Again we reduce all parts of equation~(4) by the polynomial $F$ according to the lexicographic order ${X>Y>Z>V>W}$. We obtain that the polynomials $F_2$ and $F_3$ do not change, because these polynomials are independent of the variable $X$, and $F$ reduces to zero. Denote by $\widetilde{H}_2,\widetilde{H}_3$ and $\widetilde{G}$ the images of the polynomials $H_2,H_3$ and $G$ under the reduction. Therefore, from~(4) we have
$$\widetilde{G}=\widetilde{H}_2 F_2= \widetilde{H}_3 F_3.$$

The polynomials $F_2$ and $F_3$ are relatively prime. Indeed, if these polynomials have a nontrivial common divisor, then the elements $Y^6+Z^6+W^6$ and $V$ have a nontrivial common divisor too, this is a contradiction.

Hence, from $\widetilde{H}_2 F_2= \widetilde{H}_3 F_3$ it follows that $\widetilde{H}_2 \in (F_3)$, whence $\widetilde{G}\in (F_2 F_3)$. Therefore, the image of $\widetilde{G}$ in the factor-algebra is $\widetilde{g}\in (f_2 f_3)$. Note also that $\widetilde{g}=g$, because $\widetilde{G}$ is obtained from $G$ by the reduction using $F$. This concludes the proof of the relative primeness of the elements $f_2$ and $f_3$.

It follows from Theorem~\ref{ABC} that for every derivation $D\in \text{LND}(B)$ we have
$$xy+z,\ y^6 +z^6 +w^6+ v,\ y^6 +z^6 +w^6-v\in \text{Ker}\ D.$$ Therefore, we obtain $v,\ y^6 +z^6 +w^6\in \text{Ker}\ D$. By Theorem~\ref{mysumneq0} we have $y,z,w\in \text{Ker}\ D$. Since the kernel of a locally nilpotent derivation is factorially closed, we get $x\in \text{Ker}\ D$. All generators of the algebra $B$ are in the kernel of each locally nilpotent derivation on $B$. It follows that the algebra $B$ is rigid.
\end{example}

\subsection{Construction of semi-rigid algebras}
Some examples of semi-rigid algebras can be obtained by the following theorem, see~\cite[Theorem 2.24]{fr} and~\cite{MLLND}.
\begin{thm}[Semi-Rigidity Theorem, L. Makar-Limanov] 
\label{semirid}
If $A$ is a rigid commutative $\mathbb{K}$-domain
of finite transcendence degree over $\mathbb{K}$, then $A^{[1]}$ is semi-rigid, where $A^{[1]}$ denotes a polynomial ring in one variable over $A$.
\end{thm}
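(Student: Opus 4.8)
The plan is to prove that $\mathrm{ML}(A^{[1]}) = A$. Write $A^{[1]} = A[x]$ and let $\partial$ be the locally nilpotent derivation of $A[x]$ with $\partial(A) = 0$ and $\partial(x) = 1$, so $\mathrm{Ker}\,\partial = A$. The inclusion $\mathrm{ML}(A[x]) \subseteq \mathrm{Ker}\,\partial = A$ is automatic, so it suffices to prove the reverse inclusion $A \subseteq \mathrm{ML}(A[x])$: then $\mathrm{ML}(A[x]) = A = \mathrm{Ker}\,\partial$, which is exactly the statement that $A^{[1]}$ is semi-rigid. Equivalently, one must show that $D(A) = 0$ for every $D \in \mathrm{LND}(A[x])$.

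To this end I would grade $A[x]$ by putting $A$ in degree $0$ and $\deg x = 1$, so $A[x] = \bigoplus_{k \ge 0} A x^k$. Writing $D(a) = \sum_k c_k(a) x^k$ for $a \in A$, the Leibniz rule makes each $c_k \colon A \to A$ a $\mathbb{K}$-derivation of $A$, so $D(A) = 0$ amounts to the vanishing of all the $c_k$. Decompose $D = \sum_{i \ge -1} D_i$ into homogeneous components, $D_i$ raising degrees by $i$; the lower bound $i \ge -1$ holds because $D_i(A)$ lies in degree $i$ and $D_i(x)$ in degree $i+1$, both necessarily $\ge 0$. I would use two facts. First, the standard stability of $\mathrm{LND}$ under taking leading terms with respect to a grading: the homogeneous component of $D$ of lowest degree is again in $\mathrm{LND}(A[x])$, and, when $A$ is finitely generated, so that $D$ has only finitely many nonzero homogeneous components, so is the one of highest degree. (One conjugates $D$ by the automorphism $a x^i \mapsto \lambda^i a x^i$ to get $D^{(\lambda)} = \sum_i \lambda^i D_i \in \mathrm{LND}(A[x])$ with $\deg_{D^{(\lambda)}}(b) = \deg_D(b)$ for homogeneous $b$, and then isolates the extreme power of $\lambda$ in the identity $(D^{(\lambda)})^{\deg_D(b)+1}(b) = 0$.)

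The second and decisive fact is that $A[x]$ admits no nonzero homogeneous LND of degree $\ge 0$. Indeed, suppose $E$ is such a derivation, of degree $m \ge 0$; then $E(x) = c x^{m+1}$ for some $c \in A$ and $E(a) = \delta(a) x^m$ for some $\mathbb{K}$-derivation $\delta$ of $A$. If $E(x) \ne 0$, put $N_0 = \deg_E(x) \ge 1$; then $E^{N_0}(x)$ is a nonzero element of $\mathrm{Ker}\,E$ that is homogeneous of degree $1 + N_0 m \ge 1$, hence of the form $a_0 x^N$ with $a_0 \in A \setminus \{0\}$ and $N \ge 1$, and factorial closedness of $\mathrm{Ker}\,E$ forces $x \in \mathrm{Ker}\,E$, contradicting $E(x) \ne 0$. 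Thus $E(x) = 0$, whence $E^n(a) = \delta^n(a) x^{nm}$ for all $n \ge 1$, so $\delta \in \mathrm{LND}(A)$; rigidity of $A$ gives $\delta = 0$, and therefore $E = 0$. By contrast, a homogeneous derivation of $A[x]$ of degree $-1$ kills $A$ and carries $x$ into $A$, so it equals $c\partial$ for some $c \in A$, and these are all locally nilpotent, with kernel $A$ when $c \ne 0$.

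Assembling these: for nonzero $D \in \mathrm{LND}(A[x])$, its lowest nonzero homogeneous component is a nonzero homogeneous LND, so by the second fact it has degree $-1$ and equals $c\partial$ with $c \in A \setminus \{0\}$. When $A$ is finitely generated the same applies to the highest nonzero homogeneous component; since the only component in degree $-1$ is $D_{-1}$, this forces $D = D_{-1} = c\partial$, in particular $D(A) = 0$. Hence $A \subseteq \mathrm{Ker}\,D$ for all $D \in \mathrm{LND}(A[x])$, so $A \subseteq \mathrm{ML}(A[x])$ and $\mathrm{ML}(A[x]) = A = \mathrm{Ker}\,\partial$, as desired. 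I expect the main obstacle to be the second fact, the classification of homogeneous locally nilpotent derivations of $A[x]$: this is the sole point at which rigidity of $A$ is used, and it is what makes the conclusion false without that hypothesis. A secondary issue is the passage from finitely generated $A$ — which already covers every application in this paper — to an arbitrary $A$ of finite transcendence degree, where a priori $D$ could have infinitely many homogeneous components and no top one; this is dealt with in the references cited for the theorem.
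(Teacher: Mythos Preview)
The paper does not give its own proof of this theorem: it is quoted with attribution to Makar-Limanov and accompanied only by the citations \cite[Theorem~2.24]{fr} and \cite{MLLND}, so there is no in-paper argument to compare against.

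As to your argument itself, it is sound and is essentially the standard proof found in the cited references. The two facts you isolate are correct: the extremal homogeneous components of an LND with respect to a $\mathbb{Z}$-grading are again LNDs (your conjugation argument, or equivalently the direct observation that the lowest-degree part of $D^n(b)$ is $D_{j_0}^n(b)$), and the classification of homogeneous LNDs on $A[x]$ via factorial closedness plus rigidity of $A$ is exactly the point where the hypothesis enters. Combining them forces $D=D_{-1}=c\partial$, whence $\mathrm{ML}(A[x])=A=\mathrm{Ker}\,\partial$. Your handling of the finitely generated case is complete, and, as you note, this already covers every use of the theorem in the present paper. For the general finite-transcendence-degree case the issue you flag is real --- $D$ may fail to have a top homogeneous component --- but the lowest component still exists (indices are bounded below by $-1$) and is locally nilpotent without any finiteness assumption; the remaining step, showing the higher components vanish, is what the references handle, so your deferral there is honest and appropriate.
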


\begin{example}
Consider the algebra $$B=\mathbb{K}[X,Y,Z,V,W]/((X-Y)^4 + V^4 W^5 +Z^4).$$ Let us change variables by $\widetilde{X}=X-Y,\ \widetilde{Y}=X+Y$. Then
$$B=\mathbb{K}[\widetilde{X},\widetilde{Y},Z,V,W]/(\widetilde{X}^4+V^4 W^5 + Z^4)=\left( \mathbb{K}[\widetilde{X},Z,V,W]/(\widetilde{X}^4+V^4 W^5 + Z^4) \right)[\widetilde{Y}].$$
By Corollary~\ref{RIGIDmysumeq0}, the algebra $$\mathbb{K}[\widetilde{X},Z,V,W]/(\widetilde{X}^4+V^4 W^5 + Z^4)$$ is rigid. Hence, by Theorem~\ref{semirid}, the algebra $B$ is semi-rigid. 
\end{example}
Also, some semi-rigid algebras can be obtained by Theorem~\ref{mysumeq0}. 

\begin{example}
Consider the polynomials $$F_1:=XW-YV,\ F_2:=V^4 W^5,\ F_3:= Z,\ F:=F_1^4 + F_2 +F_3^4 \in \mathbb{K}[X,Y,Z,V,W]$$ and the factor-algebra $$B=\mathbb{K}[X,Y,Z,V,W]/(F).$$ Let $f_1,f_2$ and $f_3$ be the images of the polynomials $F_1,F_2$ and $F_3$ in the factor-algebra $B$. Let us show that the algebra $B$ is semi-rigid. To prove the pairwise relative primeness of $f_1^4,f_2$ and $f_3^4$ it is sufficient to prove that $f_1,f_2$ and $f_3$ are pairwise relatively prime.

Let us prove that $f_1$ and $f_2$ are relatively prime. Let $g\in (f_1)\cap (f_2)$, then every preimage of $g$ in the polynomial ring $G$ has the form $$G=H_1 F_1 + P_1 F = H_2 F_2 + P_2 F$$ for some polynomials $H_1,H_2,P_1$ and $P_2$. We reduce this equation by $F$, assuming that $Z$ is a maximal variable in a lexicographic order. By tilde we denote images of polynomials under the reduction. We obtain $$\widetilde{G}=\widetilde{H}_1 F_1=\widetilde{H}_2 F_2.$$
The polynomials $F_1$ and $F_2$ are relatively prime, therefore, $\widetilde{G} \in (F_1 F_2)$. The image of $\widetilde{G}$ in the factor-algebra $B$ belongs to the ideal $(f_1 f_2)$ and coincides with $g$. Hence $f_1$ and $f_2$ are relatively prime. Similarly, the relative primeness of $f_2$ and $f_3$ can be proved by a reduction according to a lexicographic order with $X$ as the maximal variable.

To prove that $f_1$ and $f_3$ are relatively prime take an element $g\in (f_1)\cap (f_3)$ and its preimage $G$ in the polynomial ring: $$G=H_1 F_1  + P_1 F = H_3 F_3  + P_3 F$$ for some polynomials $H_1,H_3,P_1,P_3$. We have 
$$ H_1 F_1 +(P_1 - P_3) F = H_3 F_3.$$  We consider the residues modulo $F_3 = Z$ of all parts of this equation.
The residues of $H_1$ and $P_1-P_3$ we denote by $\widetilde{H}_1$ and $\widetilde{P}$. We obtain
$$\widetilde{H}_1 F_1  + \widetilde{P} (F_1^4 + F_2) =0,$$
whence $F_1\ |\ \widetilde{P}$. Therefore, for some polynomials $\widehat{P}_1,\widehat{P}_3$ we get $$P_1-P_3 = \widehat{P}_1 F_1 + \widehat{P}_3 F_3.$$
Hence, $$G-P_3 F= H_1 F_1 + (\widehat{P}_1 F_1 + \widehat{P}_3 F_3)F = H_3 F_3$$ and $H_1+\widehat{P}_1 F \in (F_3)$. We get $G-P_3 F - \widehat{P}_3 F_3 F\in (F_1 F_3)$ and $g\in(f_1 f_3)$, which concludes the proof of the relative primeness of $f_1$ and $f_3$.

We denote by $x,y,z,v$ and $w$ the images of $X,Y,Z,V$ and $W$ in the factor-algebra $B$. By Theorem~\ref{mysumeq0} for every $D\in \text{LND}(B)$ we have $xw-yv,v,w,z\in \text{Ker}\ D$. Therefore, we obtain $$\mathbb{K}[xw-yv,v,w,z]\subseteq \mathrm{ML}(B).$$ There exists a derivation $\widetilde{D}$ with kernel $\mathbb{K}[xw-yv,v,w,z]$: $$\widetilde{D}(x)=v,\ \widetilde{D}(y)=w,\ \widetilde{D}(v)=\widetilde{D}(w)=\widetilde{D}(z)=0.$$ Hence, $\text{Ker}\ \widetilde{D}=\mathrm{ML}(B)$, and the algebra $B$ is semi-rigid by definition.
\end{example}

\subsection{Stable Makar-Limanov invariant}
The above results can be reformulated in terms of the Makar-Limanov invariant. Suppose $m,n_1,\dots,n_m$, are positive integers, where $m\geq 3$, $a_{i}\in \mathbb{K}$ are nonzero elements, $k_{ij}$ are positive integers and $b_{ij}\in B$, where $i=1,\dots,m$ and $j=1,\dots,n_i$.

We denote $$f_i:=a_i b_{i1}^{k_{i1}}\dots b_{in_i}^{k_{in_i}} \in B,$$
where $i=1,\dots,m.$

By Theorem~\ref{mysumeq0} we have

\begin{sled}
\label{MLmysumeq0}
Assume that $f_1+\dots+f_m=0$, and elements $f_1,\dots,f_m$ are pairwise relatively prime. Suppose that $$\sum_{i=1}^{m} \sum_{j=1}^{n_i} \frac{1}{k_{ij}}\leq \frac{1}{m-2}.$$

Then $\mathbb{K}[b_{11},\dots,b_{mn_m}] \subseteq \mathrm{ML}(B)$.
\end{sled}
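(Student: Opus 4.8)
The plan is to deduce Corollary~\ref{MLmysumeq0} directly from Theorem~\ref{mysumeq0} by quantifying over all locally nilpotent derivations of $B$. The only point that needs verification is that, for each fixed $D\in\mathrm{LND}(B)$, the data $f_i=a_ib_{i1}^{k_{i1}}\cdots b_{in_i}^{k_{in_i}}$ satisfy the hypotheses of Theorem~\ref{mysumeq0} with respect to that particular $D$. In Theorem~\ref{mysumeq0} the coefficients $a_i$ are required to lie in $\mathrm{Ker}\,D$, whereas here they are only assumed to be nonzero elements of $\mathbb{K}$; but every $\mathbb{K}$-derivation annihilates $\mathbb{K}$, so $a_i\in\mathbb{K}\subseteq\mathrm{Ker}\,D$ automatically, for every $D$. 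The remaining assumptions — that $f_1+\dots+f_m=0$, that $f_1,\dots,f_m$ are pairwise relatively prime in $B$, and that $\sum_{i=1}^{m}\sum_{j=1}^{n_i}k_{ij}^{-1}\le(m-2)^{-1}$ — are conditions on $B$ and on the fixed data $b_{ij},k_{ij}$ that do not refer to $D$ at all, so they transfer verbatim.

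Concretely, I would first fix an arbitrary $D\in\mathrm{LND}(B)$ and apply Theorem~\ref{mysumeq0} to it, obtaining $b_{ij}\in\mathrm{Ker}\,D$ for all $i=1,\dots,m$ and $j=1,\dots,n_i$. Since $D$ was arbitrary, each $b_{ij}$ lies in $\bigcap_{D\in\mathrm{LND}(B)}\mathrm{Ker}\,D=\mathrm{ML}(B)$. Finally, because $\mathrm{ML}(B)$ is an intersection of $\mathbb{K}$-subalgebras of $B$ (each $\mathrm{Ker}\,D$ being a $\mathbb{K}$-subalgebra), it is itself a $\mathbb{K}$-subalgebra, hence it contains the subalgebra generated by the $b_{ij}$; that is, $\mathbb{K}[b_{11},\dots,b_{mn_m}]\subseteq\mathrm{ML}(B)$, which is the assertion.

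I do not anticipate any real obstacle: all the analytic content sits in Theorem~\ref{mysumeq0} (and through it in Theorem~\ref{genMSABC} and Lemma~\ref{vzprost}), and Corollary~\ref{MLmysumeq0} is a packaging statement. The only things worth a second glance are bookkeeping: that $m\ge 3$ guarantees $m-2\ge 1$ so the hypothesis $\sum k_{ij}^{-1}\le(m-2)^{-1}$ is meaningful, and that $B$ is a commutative $\mathbb{K}$-domain (a standing assumption), so that $\mathrm{LND}(B)$ and $\mathrm{ML}(B)$ are defined and the argument above is legitimate even when $B$ admits no nonzero locally nilpotent derivation. No idea beyond Theorem~\ref{mysumeq0} is required.
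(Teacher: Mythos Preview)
Your proposal is correct and matches the paper's approach: the paper simply states that Corollary~\ref{MLmysumeq0} follows from Theorem~\ref{mysumeq0}, and your write-up spells out exactly this deduction by intersecting over all $D\in\mathrm{LND}(B)$. There is nothing to add or change.
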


Let $X$ be a rigid affine variety. Theorem~\ref{semirid} states that $\mathrm{ML}(X\times \mathbb{K})=\mathbb{K}[X]$.  It follows 
easily that $\mathrm{ML}(X\times \mathbb{K}^n)\subseteq \mathbb{K}[X]$. 
An example of a rigid surface $X$ such that ${\mathrm{ML}(X\times \mathbb{K}^2)\neq \mathbb{K}[X]}$ is given in~\cite{D}. 

Consider the chain of subalgebras
$$\mathrm{ML}(X)\supseteq \mathrm{ML}(X\times \mathbb{K})\supseteq \mathrm{ML}(X\times \mathbb{K}^2)\supseteq \dots .\eqno{(5)}$$
Define the {\it stable Makar-Limanov invariant} as the subalgebra  equal to the intersection of the elements of (5). We denote it by $\mathrm{SML}(X)$. This term was suggested by Sergey Gaifullin.

\begin{re}
Since $\mathrm{ML}(X\times\mathbb{K}^n)$ is an algebraically closed subalgebra in the algebra ${\mathbb{K}[X\times\mathbb{K}^n]}$, there exist finitely many steps in which chain (5) is stabilized, i.e., there is a positive integer $r$ such that $\mathrm{ML}(X\times\mathbb{K}^r)=\mathrm{SML}(X)$.
\end{re}

Let us remark that elements, belonging to $\mathrm{ML}(X)$ by Corollary~\ref{MLmysumeq0}, are contained in $\mathrm{SML}(X)$. 

\begin{example}
 Let $X=\{X_1^6X_2^7+Y_1^8Y_2^9+Z_1^{10}Z_2^{11}=0\}$, see Example~\ref{trinom}. By Corollary~\ref{MLmysumeq0} it follows that 
 $$\mathrm{SML}(X)=\mathbb{K}[X].$$
\end{example}

\end{document}